\documentclass{amsart}

\usepackage{tikz-cd}

\usepackage{amssymb}

\usepackage{graphicx}
\usepackage{xcolor}                            
\usepackage[justification=justified]{caption}  

\usepackage{hyperref}

\numberwithin{equation}{section}
\numberwithin{figure}{section}

\newcommand{\norm}[1]{\lVert #1 \rVert}
\newcommand{\abs}[1]{\lvert #1 \rvert}

\newcommand{\diam}{\operatorname{diam}}
\newcommand{\Hold}{\operatorname{H\ddot{o}ld}}

\newcommand{\RR}{\mathbb{R}}
\newcommand{\ZZ}{\mathbb{Z}}
\newcommand{\XX}{\mathbb{X}}
\newcommand{\YY}{\mathbb{Y}}

\newtheoremstyle{Theorem}%
{}{}{\itshape}{}{\bfseries}{.}{ }%
{\thmname{#1}\thmnumber{\;#2}\thmnote{\;\normalfont(#3)}}

\theoremstyle{Theorem}
\newtheorem{theorem}{Theorem}[section]
\newtheorem{prop}[theorem]{Proposition}
\newtheorem{lemma}[theorem]{Lemma}
\newtheorem{cor}[theorem]{Corollary}

\theoremstyle{remark}
\newtheorem{question}[theorem]{Question}

\title[Space-filling surfaces]{Space-filling surfaces: sharp H\"older continuous parameterizations from squares to cubes}
\date{August 21, 2026}
\author{Matthew Badger}
\author{Kevin Palmer}
\subjclass[2020]{Primary 26B35, 28A80; Secondary 51F30, 54C05, 11A63}
\keywords{space-filling surface, sharp H\"older exponent, Arnold's problem, Peano curve, self-similar set,
$b$-adic representation}
\address{Department of Mathematics\\ University of Connecticut\\ Storrs, CT 06269-1009}
\email{matthew.badger@uconn.edu}
\address{Department of Mathematics\\ University of Connecticut\\ Storrs, CT 06269-1009}
\email{kevin.2.palmer@uconn.edu}
\begin{document}

\begin{abstract} Following a hint of Semmes, we employ Stong's bijections between integer lattices to construct space-filling surfaces, which are higher-dimensional analogues of space-filling curves. For each $m\geq 2$ we build $\alpha$-H\"older continuous parameterizations $f:[0,1]^m\rightarrow[0,1]^{m+1}$ with sharp exponent $\alpha=m/(m+1)$. In particular, there exist $(2/3)$-H\"older continuous surjections from squares to cubes. This solves Arnold's problem 1988--5.
\end{abstract}

\maketitle

\section{Introduction} \label{sec:intro}

Can a disk be deformed continuously into a ball? How about a square into a cube? To understand the difficulty, imagine crumpling a sheet of paper without tearing or stretching it. The resulting map $f$ from the paper rectangle into $\RR^{3}$ is Lipschitz continuous with constant 1, i.e., $\norm{f(x)-f(y)}_2\leq\norm{x-y}_2$ for all $x$ and $y$,
because folding brings pairs of points closer together and never farther apart. The more you compress the sheet, twisting and contorting it, the more the wad of paper resembles the 3d region it occupies. In the limit, though, it is crushed to a point. You might hope to recover a 3d shape by stretching the sheet back out, but dilating at every scale uniformly costs us equicontinuity, and with it any guarantee of a continuous limit. Thus, passing from a 2d shape to a 3d one requires an alternative approach.

Using a powerful topological tool, one can see that there exist continuous surjections from squares to cubes. Call a space a \emph{curve} if it is the image of a continuous map of a compact interval. The Hahn--Mazurkiewicz theorem, e.g., see \cite{sagan}, says that a metrizable space is a curve precisely when it is compact, connected, and locally connected. The cube has all three of these properties, so it is a curve and there is a continuous onto map $h:[0,1]\rightarrow [0,1]^3$. By composing $h$ with a projection $\pi:[0,1]^2\rightarrow[0,1]$, we obtain a continuous surjection $f=h\circ \pi:[0,1]^{2} \rightarrow [0,1]^{3}$. This settles the question of existence and nothing more. The argument offers no control on the modulus of continuity of $f$, and no way to judge whether $f$ deforms the square into the cube optimally.

One way to measure the distortion of a map is through the H\"older condition. Given an \emph{exponent} $0 \leq \alpha \leq 1$, we say that a map $f$ between metric spaces is \emph{$\alpha$-H\"older} if there exists a constant $0\leq H<\infty$ such that \begin{equation}\label{Holder-condition}
\norm{f(x)-f(y)} \leq H \norm{x-y}^{\alpha}\quad\text{for all $x$ and $y$}.\end{equation} Let $\Hold_\alpha(f)\in[0,\infty]$ denote the infimal $H$ such that \eqref{Holder-condition} holds, noting that $f$ is $\alpha$-H\"older if and only if $\Hold_\alpha(f)<\infty$. It is easy to see that when $\Hold_\alpha(f)<\infty$, the inequality \eqref{Holder-condition} holds with $H=\Hold_\alpha(f)$. Moreover, a map is $0$-H\"older if and only if it has bounded image, and a map that is $\alpha$-H\"older with positive exponent is continuous. A $1$-H\"older map is usually called \emph{Lipschitz}. If $0\leq \alpha\leq \beta\leq 1$, then any $\beta$-H\"older map on a bounded domain is $\alpha$-H\"older. Larger exponents $\alpha$ in the H\"older condition allow a map to distort distances less severely at small scales. Thus, the supremum $\alpha^*$ of exponents $\alpha$ for which there exists an $\alpha$-H\"older map from a bounded metric space $\XX$ onto a metric space $\YY$ serves as a gauge of least possible distortion of a continuous map from $\XX$ onto $\YY$. Once the \emph{critical H\"older exponent} $\alpha^*=\alpha^*(\XX,\YY)$ is identified, we can also ask whether or not it is attained.

Suppose that $1\leq m\leq n$. Throughout the paper we equip $\RR^m$ and $\RR^n$ with the $\ell_\infty$ norm unless stated otherwise. The orthogonal projection $\pi:[0,1]^n\rightarrow[0,1]^m$ from a higher-dimensional cube onto a lower-dimensional cube is Lipschitz with constant 1. In the other direction, suppose that $f:[0,1]^m\rightarrow[0,1]^n$ is an $\alpha$-H\"older surjection from a lower-dimensional cube onto a higher-dimensional cube. Partition the unit cube $[0,1]^{m}$ into $k^{m}$ cubes of side length $1/k$. By the H\"older condition, the image $f(Q_i)$ of each cube $Q_i$ from the partition has $\ell_\infty$ diameter at most $\Hold_{\alpha}(f)k^{-\alpha}$ and $[0,1]^n\subset \bigcup_{i=1}^{k^m} f(Q_i)$. Comparing volumes gives \begin{equation}\label{vol-est} 1=\operatorname{vol}([0,1]^n)\leq \text{total volume of the images $f(Q_i)$}\leq \Hold_\alpha(f)^n k^{m-\alpha n}\end{equation} for every $k\geq 1$. A moment's reflection shows that this is impossible as $k\rightarrow\infty$ unless $\alpha \leq m/n$. Thus, the critical H\"older exponent $\alpha^*([0,1]^m,[0,1]^n)\leq m/n$.

Constructions of space-filling curves $f:[0,1]\rightarrow[0,1]^n$ that are $(1/n)$-H\"older continuous are now well-known. The first examples, with $n=2$, appeared at the end of the 19th century and are due to Peano \cite{peano1890} and Hilbert \cite{hilbert1891}. In 1988, Arnold asked what is the critical exponent of a H\"older map from a square onto a cube and whether the critical exponent is attainable. See \cite[Problem 1988--5]{Arn04}. Shchepin \cite{shc10} solved the first part of Arnold's problem by proving that there exist $\alpha$-H\"older surjections $f_\alpha:[0,1]^m\rightarrow[0,1]^n$ for every $\alpha<m/n$. Thus, in view of the previous paragraph, the critical H\"older exponent from an $m$-cube to an $n$-cube is $m/n$. In this note, we solve the second part of Arnold's problem by showing the critical exponent can be attained.

\begin{theorem}\label{thm:main} For all $m\geq 2$, there is a surjection $f:[0,1]^m\rightarrow[0,1]^{m+1}$ satisfying \begin{equation}\label{the-param}\|f(x)-f(y)\|_\infty \leq 2^{m+2}(2^m-1) \|x-y\|^{m/(m+1)}_\infty\quad\text{for all }x,y\in[0,1]^m.\end{equation} In particular, there exist $(2/3)$-H\"older continuous maps from squares onto cubes.\end{theorem}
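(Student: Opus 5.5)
We construct $f$ as the limit of a self-similar digit-substitution map, following the hint of Semmes and using a Stong-type bijection between integer lattices. Fix a base $b$ (we expect $b=2^{m+1}$, or some power of $2$ adapted to $m$). Write a point of $[0,1]^m$ coordinatewise in base $b^{m+1}$, so that each level-$k$ digit is a vector in $\{0,\dots,b^{m+1}-1\}^m$. Write a point of $[0,1]^{m+1}$ in base $b^m$, so that each level-$k$ digit is a vector in $\{0,\dots,b^m-1\}^{m+1}$. Both digit sets have $b^{m(m+1)}$ elements. A level-$k$ subcube of the domain has side $b^{-(m+1)k}$ and a level-$k$ subcube of the target has side $b^{-mk}$, so matching level-$k$ cubes via a bijection $\sigma$ of digit sets gives exactly the scaling side$^{m/(m+1)}$. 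Define $f(\sum_k \sigma^{-1}\text{-digits})$ digitwise: $f\bigl(\sum_k d_k b^{-(m+1)k}\bigr)=\sum_k \sigma(d_k)b^{-mk}$. This is automatically surjective, since every target point has some expansion.

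The map must also be well defined where domain expansions are not unique, and continuous across boundaries of domain subcubes. So $\sigma$ cannot be arbitrary. We take it to be a bijection between the lattice grids $\{0,\dots,N-1\}^m$ and $\{0,\dots,M-1\}^{m+1}$ (with $N^m=M^{m+1}$) with two properties. First, lattice neighbors map to lattice neighbors up to bounded $\ell_\infty$ distance. Second, $\sigma$ is compatible with faces: cells on a face of the domain grid map to cells whose positions are determined consistently, so that adjacent subcubes are sent to adjacent or overlapping target cubes. Stong's bijections between integer lattices give exactly this kind of bounded-distortion correspondence ($\ell_\infty$ jump at most a constant $C$ for adjacent points). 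We would arrange boundary values by making the construction a sequence of piecewise approximations $f_k$ rather than a naive digit map. Let $f_k$ be affine or constant on level-$k$ domain cubes, with values at the centers of the matched target cubes. Then $f=\lim f_k$, where $\|f_{k+1}-f_k\|_\infty\lesssim b^{-mk}$ because each domain cube's children map inside its parent's target cube. This removes the well-definedness issue: $f$ is a uniform limit and surjectivity follows by compactness.

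For the Hölder estimate, take $x,y$ with $b^{-(m+1)(k+1)}\le\|x-y\|_\infty< b^{-(m+1)k}$. Then $x$ and $y$ lie in the same or adjacent level-$k$ domain cubes $Q,Q'$. By the neighbor property of $\sigma$, applied inductively through the ancestors, the target cubes of $Q$ and $Q'$ are within $\ell_\infty$ distance $C b^{-mk}$. Each has diameter $b^{-mk}$, and $f(Q)$ is contained in (a bounded neighborhood of) its target cube. Hence $\|f(x)-f(y)\|\le (C+2)b^{-mk}\le (C+2)b^{m}\|x-y\|^{m/(m+1)}$. Tracking $C$ and $b$ should produce the constant $2^{m+2}(2^m-1)$.

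The main obstacle is the adjacency step. Two cubes adjacent at level $k$ may have their most recent common ancestor far up the tree. Their target cubes must still be close, which requires $\sigma$ to respect boundaries in a self-similar way, in the manner of Peano or Hilbert orientations: a cell on the right face of a parent must match the cell on the left face of its neighbor. My first attempt would be to build $\sigma$ so that boundary layers of the domain grid map to boundary layers of the target grid, with reflections chosen per cube as in the Peano curve. Failing that, I would use the weaker property that the images of the faces of adjacent level-$k$ cubes are within $O(b^{-mk})$ of each other, proved by induction on the level.
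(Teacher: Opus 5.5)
There is a genuine gap, and it is exactly the step you flag as the main obstacle. Your H\"older estimate needs a digit bijection $\sigma$ such that any two touching level-$k$ domain cubes, however far up their last common ancestor lies, go to target cubes within $O(b^{-mk})$ of each other. So does the continuity of the limit, since a uniform limit of maps that are constant on cubes need not be continuous. A neighbor property of $\sigma$ on a single grid only controls the level of the common ancestor. Moreover, if $f(Q)\subset\sigma(Q)$, continuity forces the two target cubes to actually touch. That is a self-similar, Peano-type matching of subcubes of $[0,1]^m$ with subcubes of $[0,1]^{m+1}$. As the paper's introduction notes, Shchepin's Theorem~4 says no map of this kind exists from the $m$-cube onto the $n$-cube when $m<n<2m$, and for $n=m+1$ that is every $m\geq 2$. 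So your first attempt (boundary layers to boundary layers, with Peano-style reflections) cannot be carried out. Your fallback (prove by induction that images of faces of adjacent cubes stay close) restates what is needed rather than supplying it. Stong's theorem does not provide $\sigma$ either: his bijection $F=L|_{X_m^m}$ does not restrict to a bijection between a lattice cube in $\ZZ^m$ and a lattice cube in $\ZZ^{m+1}$. Its finite pieces $L(S_k^m)$ are not cubes; Proposition~\ref{prop:bilip} and the proof of Lemma~\ref{lem:cube} only place them between the cubes of radii $h_k/M$ and $(M-1)h_k$. Consequently, nothing supports the expectation that tracking $C$ and $b$ yields $2^{m+2}(2^m-1)$.

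The missing idea is to give up bijective cube-to-cube matching altogether. The paper controls adjacency across levels only in dimension one, where Peano-type constructions do exist. The X-shaped self-similar set $E_\infty\subset\RR^2$, the continuous analogue of $X_m$ (Lemma~\ref{lem:gamma}), is parameterized by a curve $g$ with $\Hold_\alpha(g)\leq 4$ (Proposition~\ref{thm:curve}). The paper then sets $f=T\circ\pi\circ L\circ(g\times\cdots\times g)$. Here $L$ is Stong's linear map, with $\norm{L(z)}_\infty\leq (M-1)\norm{z}_\infty$; $\pi$ is the $1$-Lipschitz coordinatewise truncation onto $B=[-\tfrac{1}{2M},\tfrac{1}{2M}]^{m+1}$; and $T$ is a dilation by $M$. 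The constant $4(M-1)M=2^{m+2}(2^m-1)$ is then immediate. Surjectivity is proved by covering rather than built in digit by digit. Lemma~\ref{lem:cube} shows $B\subset L(E_\infty^m)$ by applying $F^{-1}$ to dyadic points $M^k y$. The lower bound $\norm{x}_\infty\leq M\norm{F(x)}_\infty$ places these preimages in $S_k^m$, Lemma~\ref{lem:lattice-in-E} places them in $E_\infty^m$ after rescaling, and a closure argument finishes. The slack between $L(E_\infty^m)$ and $B$, absorbed by the retraction $\pi$, is exactly what a Peano-type digit substitution does not allow.
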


Composing parameterizations of space-filling curves and space-filling surfaces between consecutive dimensions immediately gives the following:

\begin{cor}\label{cor:global} For all $1\leq m\leq n$, there exist $(m/n)$-H\"older continuous maps from $[0,1]^m$ onto $[0,1]^n$.\end{cor}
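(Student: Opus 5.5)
The corollary is pure composition. Fix $1\le m\le n$. If $m=n$, the identity map is Lipschitz, which is $(m/n)$-H\"older. If $m<n$, I will build a chain
\[
[0,1]^m \xrightarrow{f_m} [0,1]^{m+1} \xrightarrow{f_{m+1}} \cdots \xrightarrow{f_{n-1}} [0,1]^n,
\]
where each $f_k:[0,1]^k\to[0,1]^{k+1}$ is a surjection that is $\alpha_k$-H\"older with $\alpha_k=k/(k+1)$. The composition is then surjective.

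The maps $f_k$ come from two sources. For $k\ge 2$, Theorem~\ref{thm:main} supplies $f_k$ directly. The case $k=1$ is needed only when $m=1$. Here I take a classical $(1/2)$-H\"older Peano or Hilbert curve $[0,1]\to[0,1]^2$, as recalled in the introduction; its H\"older exponent is unaffected by the passage to the $\ell_\infty$ norm, since all norms on $\RR^2$ are comparable.

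The one computation is that H\"older exponents multiply under composition. If $g$ is $\alpha$-H\"older with constant $H_g$ and $h$ is $\beta$-H\"older with constant $H_h$, then
\[
\norm{h(g(x))-h(g(y))}\le H_h\norm{g(x)-g(y)}^{\beta}\le H_hH_g^{\beta}\norm{x-y}^{\alpha\beta}.
\]
So $h\circ g$ is $(\alpha\beta)$-H\"older. Iterating along the chain, the composite is H\"older with exponent given by a telescoping product:
\[
\prod_{k=m}^{n-1}\frac{k}{k+1}=\frac{m}{n}.
\]
The resulting constant is finite, being an explicit product of powers of the individual constants, and the domain is bounded, so no further issue arises.

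There is no genuine obstacle here: all the difficulty sits in Theorem~\ref{thm:main} and in the existence of $(1/2)$-H\"older Peano curves. The only points to check are that surjectivity is preserved under composition, which is immediate, and that the $m=1$ case correctly uses the classical space-filling curve rather than Theorem~\ref{thm:main}, which is stated only for $m\ge2$.
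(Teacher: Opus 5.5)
Your proposal is correct and matches the paper's proof: you compose the surjections of Theorem~\ref{thm:main} in dimensions $m,\ldots,n-1$, using a $(1/2)$-H\"older space-filling curve as the first link when $m=1$. Exponents multiply under composition, so the telescoping product gives $m/n$, and the identity handles $m=n$.
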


The proof of Theorem \ref{thm:main} is split between Sections~2 and 3. We present additional variants of Theorem \ref{thm:main} and discuss the problem of finding best H\"older constants in \eqref{the-param} in Section~\ref{sec:further}.

To build a space-filling curve, one typically breaks the domain interval $[0,1]$ and target cube $[0,1]^n$ into an equal number of subcubes, labeled so that consecutive intervals are assigned to neighboring subcubes. This assignment is then refined in a self-similar fashion. Shchepin~\cite[Theorem~4]{shc10} proved that no map of this kind exists from the $m$-cube onto the $n$-cube when $m<n<2m$, which explains why classic Peano and Hilbert constructions of space-filling curves fail to produce space-filling surfaces. Other approaches to space-filling surfaces with different merits were given by Ahmed and Bokhari~\cite{ahmed-bokhari} and Paulsen~\cite{paulsen}, but do not achieve the critical H\"older exponent.

Shchepin has announced a solution of Arnold's problem \cite{shc26} in a brief note that states theorems and formulas but includes no proofs. We obtained our results independently, with complete proofs and explicit Hölder constants

The starting point for our construction of space-filling surfaces is Stong's solution to a discrete variant of Arnold's problem.

\begin{theorem}[Stong~\cite{stong1998}]\label{thm:stong-holder}
    For all integers $1 \leq m \leq n$, there is a $(m/n)$-H\"older bijection from $\ZZ^{m}$ onto $\ZZ^{n}$.
\end{theorem}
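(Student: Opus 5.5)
The plan is to reduce to consecutive dimensions and then build the bijection one scale at a time, using $b$-adic digits. Hölder exponents multiply under composition, and bijections compose to bijections. So if we have an $(m/(m+1))$-Hölder bijection $\ZZ^m\to\ZZ^{m+1}$ for every $m$, then composing $\ZZ^m\to\ZZ^{m+1}\to\cdots\to\ZZ^n$ gives exponent $\prod_{j=m}^{n-1} j/(j+1)=m/n$. The case $m=n$ is the identity. On the lattice all distinct points are at distance at least $1$, so it is enough to prove $\norm{f(x)-f(y)}_\infty\le C\norm{x-y}_\infty^{m/n}$ with a constant $C$. It is convenient to keep general $m<n$ in the construction itself.

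\emph{Scale matching.} Fix a base $q\ge 2$ and set $b=q^n$, so that a domain cube of side $b^k=q^{nk}$ in $\ZZ^m$ and a target cube of side $q^{mk}$ in $\ZZ^n$ both contain $q^{mnk}$ points. The construction is a nested family of bijections. The $k$-th generation domain cubes $Q$ (side $q^{nk}$) are sent to $k$-th generation target cubes $P$ (side $q^{mk}$). Each domain cube $Q$ splits into $q^{mn}$ children, and so does its image $P$, and the children of $Q$ are matched with the children of $P$. In digit language, the level-$k$ base-$q^n$ digit vector of $x$ (an element of $\{0,\dots,q^n-1\}^m$) determines the level-$k$ base-$q^m$ digit vector of $f(x)$ (an element of $\{0,\dots,q^m-1\}^n$). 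Both digit sets have $q^{mn}$ elements. Handling all of $\ZZ^m$ rather than $\mathbb{N}^m$ is done by letting the generations exhaust space along a nested sequence of cubes containing the origin, or by working with balanced digits.

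\emph{Hölder estimate.} Take $x\ne y$ with $q^{n(k-1)}\le\norm{x-y}_\infty<q^{nk}$. If $x$ and $y$ lie in a common generation-$(k+1)$ cube, then $f(x)$ and $f(y)$ lie in a common cube of side $q^{m(k+1)}$. That gives $\norm{f(x)-f(y)}\lesssim q^{mk}\lesssim\norm{x-y}^{m/n}$, as required. The trouble is pairs that straddle the boundary of high-generation cubes. With a naive digit interleaving (Morton order), a carry at a high level moves the image by a huge amount while $x$ moves by $1$.

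\emph{Main obstacle: compatibility across cube faces.} The key step, and the one I expect to be hardest, is choosing the matchings of children so that points in adjacent domain cubes of a given generation are sent to target cubes within a bounded number of side lengths of each other. This must hold uniformly through all generations. I would first try a reflected, Gray-code type rule. At each generation the matching inside a child is the parent pattern composed with reflections, where the reflections are chosen so that across every shared face of two domain children the induced maps agree up to a reflection of the target. This is exactly how the Peano and Hilbert curves avoid carries.

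Shchepin's obstruction warns that one cannot demand that adjacent cubes go to adjacent cubes for every face simultaneously. So I would only require bounded distance, for instance images within $L$ generation-$k$ target cubes for a fixed $L$. I would also allow $q$ to be large, so that a single-generation combinatorial lemma has room. The lemma would be a bijection $\{0,\dots,q^n-1\}^m\to\{0,\dots,q^m-1\}^n$ that is $(m/n)$-Hölder with constant independent of $q$ and is compatible with reflections on the faces. A candidate for this lemma is a snake (boustrophedon) enumeration of both digit sets, which works at least for $m=1$. Given such a lemma, the estimate for straddling pairs goes by induction on generation. The distance between $f(x)$ and $f(y)$ is controlled by the generation-$k$ target cubes containing them, which by the lemma are $O(1)$ cubes apart, plus the diameters of those cubes. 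This yields the bound $C q^{mk}$, and hence the theorem.
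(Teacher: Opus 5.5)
\textbf{There is a genuine gap, and the approach cannot work in the cases you need.} Your reduction to consecutive dimensions by composing bijections is fine; the paper passes from $m\to m+1$ to general $m\le n$ the same way. Everything after that rests on a one-generation matching lemma with face compatibility that you never supply. Worse, the cube-to-cube architecture you want it for is blocked when $n=m+1\ge 3$.

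\emph{Bounded distance is not a real relaxation of adjacency.} Let $x,y$ be lattice neighbours on opposite sides of a common face of two generation-$j$ domain cubes. The H\"older bound gives $\norm{f(x)-f(y)}_\infty\le C$, so once $q^{mj}>C$ the two target cubes must touch. Your child-matching rule is the same at every generation up to reflections, so this is a constraint on the rule itself.

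\emph{The limit is a map Shchepin rules out.} Rescale a generation-$K$ cube by $q^{-nK}$ in the domain and $q^{-mK}$ in the target, and let $K\to\infty$ along generations with a fixed orientation. This would yield a continuous, self-similar, cube-to-cube $(m/n)$-H\"older surjection $[0,1]^m\to[0,1]^n$. As recalled in the introduction, Shchepin proved that no map of this kind exists for $m<n<2m$. Every step $m\to m+1$ with $m\ge 2$ lies in this range, and $(m,n)=(2,3)$ lies in it however you factor.

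\emph{Two smaller problems point the same way.} First, a bijection $\{0,\dots,q^n-1\}^m\to\{0,\dots,q^m-1\}^n$ that is $(m/n)$-H\"older with constant independent of $q$ would, by the same rescaling, already give a continuous $(m/n)$-H\"older surjection of cubes. So your lemma is not a simplification. Second, the snake enumeration fails the lemma even for $m=1$. Indices $0$ and $q-1$ go to points at distance $q-1$, so the H\"older ratio is $(q-1)^{1-1/n}$, which is unbounded in $q$.

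\textbf{The missing idea} is Stong's, which the paper outlines in the introduction and in Section~\ref{sec:stong}: do not match cubes to cubes at all.
\begin{enumerate}
\item Solve a curve problem: find an $(m/(m+1))$-H\"older bijection $g$ of $\ZZ$ onto the X-shaped set $X_m\subset\ZZ^2$ of pairs whose $M$-adic digit pairs $(d_i,c_i)$ all lie in $\mathcal A$, with $M=2^m$. The domain is one-dimensional and the target has dimension $(m+1)/m$, so self-similar traversals are available, as in Proposition~\ref{thm:curve}.
\item Take the product $g^{\otimes m}\colon\ZZ^m\to X_m^m$, which keeps the exponent for $\norm{\cdot}_\infty$.
\item Apply the linear map $L((a_j,b_j)_{j})=(\sum_j 2^{j-1}a_j,b_1,\dots,b_m)$. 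It restricts to a bijection $X_m^m\to\ZZ^{m+1}$ (Proposition~\ref{prop:bij}); the digit condition and the weights $2^{j-1}$, which sum to $M-1$, do the work there. It is Lipschitz because it is linear (Proposition~\ref{prop:bilip}).
\end{enumerate}
The exponent comes entirely from the one-dimensional map $g$. The extra dimension is produced by base-$M$ arithmetic rather than by a geometric matching of subcubes, so Shchepin's obstruction never enters.
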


In an unpublished manuscript \cite[Chapter 9]{Semmes03}, Semmes notes that existence of an $(m/n)$-H\"older surjection $f:\RR^m\rightarrow\RR^n$ should follow by using an appropriate compactness theorem to take a limit of rescaled copies of Stong's map on finer and finer lattices.
To prove Theorem \ref{thm:main}, we follow the spirit of Semmes' suggestion, but with an essential improvement. We give a direct generalization of Stong's argument that avoids normal families. The original idea of Stong is to take an $m$-fold product of an $m/(m+1)$-H\"older map $g$ from $\ZZ$ onto a fractal-like $(m+1)/m$-dimensional set $X_m\subset\ZZ^2$. To obtain Theorem \ref{thm:stong-holder}, Stong then post-composes $\bigotimes_1^m g:\ZZ^m\rightarrow X_m^m$ with a linear transformation from $X_m^m$ onto $\ZZ^{m+1}$. We design continuous analogues of $X_m$ and $g$ in Section~2 and use Stong's transformation to complete the proof of Theorem \ref{thm:main} in Section~3. Key technical steps are the H\"older estimate Proposition \ref{thm:curve} and the bi-Lipschitz estimate Proposition \ref{prop:bilip}. Up to a harmless affine change of coordinates, the final map $f:[0,1]^m\rightarrow[0,1]^{m+1}$ is then a composition \begin{equation}\label{eq:factor-f}
\begin{tikzcd}[column sep=huge]
{[0,1]^m} \arrow[r, "\otimes_1^m g"] \arrow[rrr, bend right=10, "f"']
  & E_\infty^m \arrow[r, "L"]
  & L(E_\infty^m) \arrow[r, "\pi"]
  & {[0,1]^{m+1}},
\end{tikzcd}
\end{equation} where $E_\infty\subset\RR^2$ is a connected self-similar set of Hausdorff dimension $(m+1)/m$ presented as an $m/(m+1)$-H\"older curve $E_\infty=g([0,1])$, $L:\RR^{2m}\rightarrow\RR^{m+1}$ is a rank $m+1$ linear transformation, and $\pi:\RR^{m+1}\rightarrow[0,1]^{m+1}$ is a projection.

Beyond cubes, the construction of H\"older and Lipschitz parameterizations is an active subject. Balka and Keleti~\cite{BK24} proved a very general existence theorem for nearly sharp H\"older surjections. Badger and Schul~\cite{BS25} use square packings to give a sharper criterion for existence of Lipschitz surjections with Euclidean domains and metric targets. An application of Theorem \ref{thm:main} to the relationship between Federer integral rectifiability \cite{Fed47} and Mart\'in-Mattila fractional rectifiability \cite{MM93} appears in Badger and Vellis~\cite{BV19}.

\section{Parameterization of X-fractals with small H\"older constant} \label{sec:selfsimilar}

Throughout Sections~\ref{sec:selfsimilar} and~\ref{sec:stong}, we fix an integer $m \geq 2$ and assign $M=2^m$ and $\alpha=m/(m+1)$. The case of interest for Arnold's square-to-cube problem is $m = 2$, $M = 4$, and $\alpha = 2/3$. Recall we equip vector spaces with the $\ell_\infty$-norm.

\begin{figure}[t]
    \centering
    \includegraphics[width=.95\textwidth]{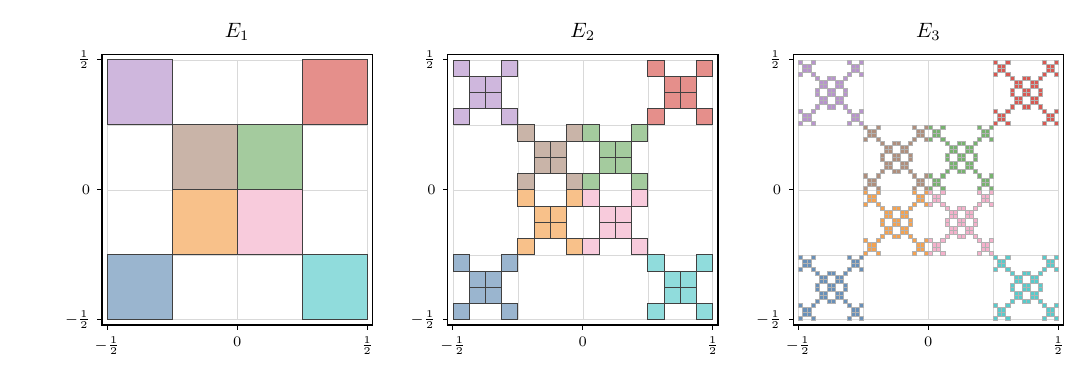}
    \caption{From left to right, we illustrate the first three iterates $E_1$, $E_2$, and $E_3$ of $Q=[-\tfrac12, \tfrac12]^{2}$ under the IFS $\mathcal{E}$ when $m=2$.}
    \label{fig:IFS-Iterations}
\end{figure}

We now define an ``X-shaped'' self-similar set $E_\infty$ in $\RR^2$ with similarity dimension $1/\alpha$; see Figure \ref{fig:IFS-Iterations}. Let $Q := \bigl[-\tfrac12, \tfrac12\bigr]^{2}$. Divide $Q$ into $M^{2}$ closed subsquares $Q_{p,q}$ of side length $1/M$, indexed by $(p,q)\in \{0, \ldots, M-1\}^2$. We keep the $2M$ subsquares that intersect one of the two diagonals of $Q$ and throw out the rest, i.e., we keep $Q_{p,q}$ provided that $(p,q)$ belongs to \begin{equation}\label{alphabet}\mathcal{A}:=\{(p,q)\in\{0,\ldots,M-1\}^2:p=q\text{ or }p+q=M-1\}.\end{equation} Let $\phi_{p,q}$ denote the unique affine map formed using only dilations and translations that sends $Q$ to $Q_{p,q}$, i.e.~
        \begin{equation}
            \phi_{p,q}(x) := \tfrac{1}{M} x + c_{p,q}, \qquad
            c_{p,q} := \bigl( -\tfrac12 + \tfrac{2p+1}{2M},\ -\tfrac12 + \tfrac{2q+1}{2M} \bigr).
        \end{equation}
    Then $\mathcal{E}=\{\phi_{p,q}:(p,q)\in\mathcal{A}\}$ is a connected, self-similar iterated function system (IFS) that satisfies the open set condition (OSC) and has similarity dimension $\log_M(2M)=(m+1)/m$; see e.g.~\cite[Section 2]{BV21}.
    For a word $w = i_1 \cdots i_n$ in the alphabet $\mathcal{A}$, we write $\phi_w$ for the composite map $\phi_{i_1}\circ\cdots\circ\phi_{i_n}$. For all words $w$ of length $|w|=n$, the image $\phi_w(Q)$ is a square of side length $M^{-n}$. Define $E_k := \bigcup_{\abs w = k} \phi_w(Q)$. Then the intersection $E_\infty := \bigcap_{k \geq 0} E_k$ is a nonempty compact set and $E_\infty = \bigcup_{\phi \in \mathcal{E}} \phi(E_\infty)$. That is, $E_\infty$ is the attractor of $\mathcal{E}$, and by Hutchinson's theorem \cite{Hut81}, $E_\infty$ has Hausdorff dimension $\log_{M}(2M)=1/\alpha$. Since each $E_k$ is connected, Hata's theorem \cite{Hat85} ensures that $E_\infty$ is connected.

    Remes' theorem \cite{Remes98} says that every connected self-similar set in Euclidean space with the OSC is a $\beta$-H\"older curve, where $1/\beta$ is the similarity dimension. For an exposition and some extension of Remes' theorem, see Badger and Vellis \cite{BV21}. In particular, it follows that $E_\infty$ is an $\alpha$-H\"older curve. Unfortunately, the H\"older constant in Remes' theorem is implicit and is quite large. To obtain a parameterization of $E_\infty$ with a small H\"older constant, we implement the graph-directed IFS method introduced by Rao and Zhang~\cite{RZ16} and further developed with Dai in \cite{DRZ19} and \cite{RZ20}. The proof of Proposition \ref{thm:curve} is self-contained.

\begin{prop}\label{thm:curve}
    There is an $\alpha$-H\"older surjection $g \colon [0,1] \to E_\infty$, $\Hold_\alpha(g) \leq 4$.
\end{prop}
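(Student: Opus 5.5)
We build $g$ as the uniform limit of piecewise-linear (or cube-walking) approximations produced by a graph-directed IFS, in the spirit of Rao--Zhang. The basic object is an ordering of the $2M$ kept subsquares $Q_{p,q}$, $(p,q)\in\mathcal{A}$, into a chain $Q_{w_1},\dots,Q_{w_{2M}}$ in which consecutive squares share at least a corner. Each piece also needs an orientation, meaning an entry corner and an exit corner of $Q$ that the sub-path through that piece must respect.

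The X shape makes this delicate. The natural path runs down one diagonal, from corner $(-\tfrac12,-\tfrac12)$ to $(\tfrac12,\tfrac12)$, and must also pick up the other diagonal. For that I would traverse the arm from the corner $(\tfrac12,-\tfrac12)$ to the center and back, or split the anti-diagonal into two arms visited as out-and-back excursions. Concretely, I would introduce finitely many ``states'': curve types specified by an entry corner and an exit corner. The likely ones are a diagonal type (opposite corners) and a type returning to the same corner, or entering and exiting at adjacent corners.

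For each state I would exhibit a substitution rule. It is a sequence of the $2M$ squares $\phi_{p,q}(Q)$, each labeled with a state composed with a symmetry of the square (reflections and rotations preserve $\mathcal{A}$), such that the exit point of each child equals the entry point of the next. The first entry must be the parent's entry corner and the last exit the parent's exit corner. Squares may be visited more than once only if the counting works out: a curve need not be injective. It is simpler, though, to assign each square exactly once and use self-intersecting corner contacts at the center.

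Given such a graph-directed system, I would define $g$ on $[0,1]$ by splitting $[0,1]$ into $2M$ equal intervals, mapping the $i$th to the $i$th child square with the assigned state, and iterating. At level $n$ each interval of length $(2M)^{-n}$ maps into a square $\phi_w(Q)$ of side $M^{-n}$. Nested intersections define $g(t)$, and consistency at interval endpoints follows from the matching entry and exit corners. Surjectivity onto $E_\infty$ holds because every word $w$ of every length appears.

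The Hölder estimate is the standard scale argument. For $s<t$, choose $n$ with $(2M)^{-n-1}< |t-s|\le (2M)^{-n}$. Then $s,t$ lie in at most two adjacent level-$n$ intervals, whose image squares have side $M^{-n}$ and share a point. Hence
\[
\|g(s)-g(t)\|_\infty\le 2M^{-n}=2\bigl((2M)^{-n}\bigr)^{\alpha}\le 2\,(2M)^{\alpha}|t-s|^\alpha .
\]
This uses $M^{-1}=(2M)^{-\alpha}$, which holds since $(2M)^{m/(m+1)}=2^{m}=M$. This gives a constant of $2M$, which is too big, so the bound needs refinement.

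To reach $4$, I would instead use two adjacent level-$(n+1)$ intervals together with the structure of the chain. Within one level-$n$ interval, points $s,t$ separated by $k$ children lie in squares of side $M^{-n-1}$. The key is that the displacement along the chain after $k$ steps is at most about $k$ small squares, plus the diameter of one square, or so, along a diagonal. This yields $\|g(s)-g(t)\|\lesssim (k+1)M^{-n-1}$ against $|t-s|^\alpha\gtrsim ((k-1)(2M)^{-n-1})^\alpha$. Since the path moves monotonically along arms, the ratio of $k$ to $(k-1)^\alpha$ stays bounded, and one must also handle the out-and-back excursions, where displacement grows more slowly.

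The main obstacle is this careful bookkeeping: designing the substitution (especially the excursion along the anti-diagonal) so that the chain never jumps far, and then optimizing the case analysis over $k$ to land the constant $4$. I would try first the two-state system (diagonal and out-and-back) and verify the bound numerically for $m=2$ before writing the general estimate.
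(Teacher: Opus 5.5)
There is a genuine gap: the square-based substitution you describe cannot exist for any choice of states. Your scheme matches the $2M$ equal level-one intervals bijectively with the $2M$ kept squares. This is forced, since the center of each $Q_{p,q}$ lies in $E_\infty$ and in no other kept square, so every square needs its own interval. Because $M\geq 4$, the corner square $Q_{0,0}$ meets no kept square except $Q_{1,1}$, and only in a single corner point. Suppose the interval assigned to $Q_{0,0}$ were neither the first nor the last. Continuity at its two endpoints would then force both neighbouring intervals to be assigned to $Q_{1,1}$, which is used only once. So each of the four corner squares must take the first or the last interval, which is impossible. The argument applies inside every state at every level, so neither a diagonal state nor an out-and-back state can be realized by a rule that uses each child square once. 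Contacts at the center do not help.

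The missing idea, which the paper uses, is to subdivide further. The paper cuts each kept square along its diagonals into four isosceles right triangles. It works with an IFS $\mathcal F$ of $2M$ similarities on the quarter-triangle $T=\Delta[q_1,q_2]$, whose triangles $\Delta[p_{i-1},p_i]$ form a staircase from $q_1$ to $q_2$. This is a single state, entering and leaving at adjacent corners. Its attractor $V_\infty$ is parametrized by a uniform limit $s$ of piecewise-linear maps. Then $g$ is the loop formed by $s$, $R\circ s$, $R^2\circ s$, $R^3\circ s$. Each kept square is thus visited four times, and $W=\bigcup_{i\in\mathcal A}\phi_i(W)$ identifies the image with $E_\infty$ by uniqueness of attractors.

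Second, nothing in your proposal produces the constant $4$. Your crude bound gives $2M$, and the refinement is not yet an argument. For adjacent children ($k=1$) your lower bound $\bigl((k-1)(2M)^{-n-1}\bigr)^{\alpha}$ vanishes, and ``stays bounded'' is not a numerical constant.

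Adjacent pieces need an endpoint estimate. The paper proves $\norm{s(x)-q_1}_\infty\leq\lambda x^{\alpha}$ with $\lambda=2^{1/(m+1)}$, using the concave majorant $\widehat\xi$ of the first coordinate. It then glues across a junction with H\"older's inequality $\lambda(a^\alpha+b^\alpha)\leq\lambda^2(a+b)^\alpha$.

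For $\nu\geq 1$ intermediate pieces, the paper uses that the staircase alternates horizontal and vertical steps. So the displacement between the junction points is at most $\lceil\nu/2\rceil/M$, and the union of the two end triangles has diameter at most $(\nu+3)/(2M)$. It then proves the inequality $\lambda^2(\nu+\rho)^\alpha\geq(\nu+3)/2$ with $\rho=2^{-2/m}$.

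This gives $\Hold_\alpha(s)\leq 4^{1/(m+1)}$. Concatenating four copies on intervals of length $\tfrac14$ costs exactly a factor $4^\alpha$, so $\Hold_\alpha(g)\leq 4^\alpha\cdot 4^{1/(m+1)}=4$.
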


We first construct an $\alpha$-H\"older parameterization $s$ of the attractor of an auxiliary IFS of triangles. Rotated copies of $s$ are assembled into $g$ at the end of the section.

Every line segment below is horizontal or vertical. For each such segment $[a,b]$, oriented from $a$ to $b$, let $\Delta[a,b]$ denote the isosceles right triangle with hypotenuse $[a,b]$ whose opposite vertex lies to its left. Then $\diam \Delta[a,b] = \norm{b-a}_\infty$.

Let $q_1 := (0,0)$ and $q_2 := (1,0)$, and let $T := \Delta[q_1,q_2]$, whose third vertex is $(\tfrac12,\tfrac12)$. Put $K := M/2$ and $p_0 := q_1$, and for $0 \leq j < M$ let
    \[
        \begin{aligned}
            p_{2j+1} &:= \Bigl( \tfrac{j+1}{M}, \tfrac{j}{M} \Bigr), &
            p_{2j+2} &:= \Bigl( \tfrac{j+1}{M}, \tfrac{j+1}{M} \Bigr) & &(j < K), \\
            p_{2j+1} &:= \Bigl( \tfrac{j}{M}, \tfrac{M-1-j}{M} \Bigr), &
            p_{2j+2} &:= \Bigl( \tfrac{j+1}{M}, \tfrac{M-1-j}{M} \Bigr) & &(j \geq K).
        \end{aligned}
    \]
Consecutive points differ by $1/M$ in exactly one coordinate. The first $M$ of them climb from $q_1$ to $p_M$, the vertex of $T$ opposite the hypotenuse, and the last $M$ descend to $p_{2M} = q_2$, tracing the staircase on the left of Figure~\ref{fig:substitution}. They are distinct except that $p_{M-1} = p_{M+1}$. The last triangle of the climb and the first of the descent therefore share a hypotenuse and lie on opposite sides of it.

For $1 \leq i \leq 2M$, let $\psi_i$ be the orientation-preserving similarity carrying $q_1$ to $p_{i-1}$ and $q_2$ to $p_i$. Its ratio is $1/M$ and $\psi_i(T) = \Delta[p_{i-1},p_i] \subset T$. Then $\mathcal{F} := \{\psi_1, \ldots, \psi_{2M}\}$ is a connected, self-similar IFS satisfying the OSC, with similarity dimension $\log_M(2M) = (m+1)/m$. For a word $w = i_1 \cdots i_k$ in the alphabet $\{1, \ldots, 2M\}$, we write $\psi_w$ for the composite map $\psi_{i_1}\circ\cdots\circ\psi_{i_k}$ and call $\psi_w([q_1,q_2])$ a \emph{level-$k$ segment} and $\psi_w(T)$ its triangle, of diameter $M^{-k}$. Taken in lexicographic order, the $(2M)^{k}$ level-$k$ segments meet head to tail and run from $q_1$ to $q_2$. Define $V_k := \bigcup_{\abs w = k} \psi_w(T)$. Then the intersection $V_\infty := \bigcap_{k \geq 0} V_k$ is a nonempty compact set and $V_\infty = \bigcup_{\psi \in \mathcal{F}} \psi(V_\infty)$. That is, $V_\infty$ is the attractor of $\mathcal{F}$.

Divide $[0,1]$ into intervals $I_{k,1}, \ldots, I_{k,(2M)^{k}}$ of equal length, write $J_k := I_{1,k} = [\ell_k, r_k]$, and put $\Delta_k := \psi_k(T)$.
\begin{figure}[t]
    \centering
    \includegraphics[width=\textwidth]{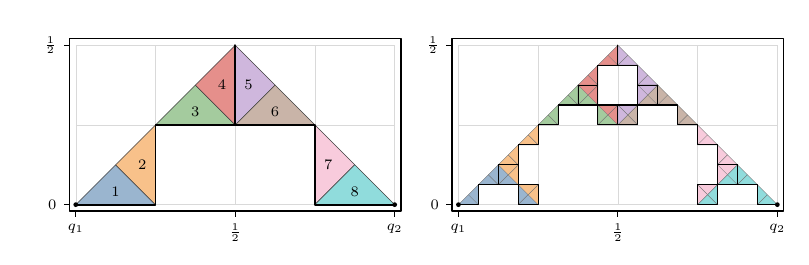}
    \caption{First two levels of the triangle system when $m = 2$. The unions of the hypotenuses are the images of $s_1$ and $s_2$.}
    \label{fig:substitution}
\end{figure}

\begin{lemma}\label{lem:param}
    There is a continuous surjection $s \colon [0,1] \rightarrow V_\infty$ with $s(0) = q_1$ and $s(1) = q_2$ such that, whenever $w$ is the $j$-th word of length $k$ (in the sense above),
        \begin{equation}\label{eq:self}
            s(x) = \psi_w \bigl( s ((2M)^{k}x - j + 1) \bigr) \quad \text{for all } x \in I_{k,j}
        \end{equation}
    and
        \begin{equation}\label{eq:localize}
            s(I_{k,j}) \subset \psi_w(T) .
        \end{equation}
\end{lemma}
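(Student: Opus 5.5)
We build $s$ as the uniform limit of polygonal approximations. For each $k\ge 0$, let $s_k\colon[0,1]\to\RR^2$ be the piecewise linear map that sends $I_{k,j}$ affinely onto the $j$-th level-$k$ segment $\psi_w([q_1,q_2])$, where $w$ is the $j$-th word of length $k$ in lexicographic order. Since the level-$k$ segments meet head to tail and run from $q_1$ to $q_2$, each $s_k$ is continuous with $s_k(0)=q_1$ and $s_k(1)=q_2$. Equivalently, $s_0$ is the affine parameterization of $[q_1,q_2]$ and
\[
s_{k+1}(x)=\psi_i\bigl(s_k(2Mx-i+1)\bigr)\quad\text{for } x\in I_{1,i},\ 1\le i\le 2M .
\]
This recursion is consistent at the breakpoints $x=i/(2M)$ because $\psi_i(q_2)=p_i=\psi_{i+1}(q_1)$. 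Iterating it gives the exact identity $s_{k+n}(x)=\psi_w\bigl(s_n((2M)^kx-j+1)\bigr)$ on $I_{k,j}$ for every $n\ge 0$, with $|w|=k$.

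Next we show that the sequence is uniformly Cauchy and localized. Fix $x\in I_{k,j}$. For every $n\ge k$, the value $s_n(x)$ lies on a level-$n$ segment $\psi_{wu}([q_1,q_2])$ with $u$ a word of length $n-k$, and $\psi_{wu}([q_1,q_2])\subset\psi_{wu}(T)\subset\psi_w(T)$, since $\psi_i(T)\subset T$ for each $i$. Hence $s_n(I_{k,j})\subset\psi_w(T)$ for all $n\ge k$, and $\diam\psi_w(T)=M^{-k}$. Taking $n,n'\ge k$ and $x\in I_{k,j}$, we get $\|s_n(x)-s_{n'}(x)\|_\infty\le M^{-k}$. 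So $(s_n)$ is uniformly Cauchy and converges uniformly to a continuous map $s$ with $s(0)=q_1$, $s(1)=q_2$. Because $\psi_w(T)$ is closed, passing to the limit yields \eqref{eq:localize}. Letting $n\to\infty$ in the iterated recursion, and using continuity of $\psi_w$, yields \eqref{eq:self}.

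It remains to show that $s([0,1])=V_\infty$. For the inclusion $s([0,1])\subset V_\infty$: for each $k$, the intervals $I_{k,j}$ cover $[0,1]$, so by \eqref{eq:localize} we have $s([0,1])\subset\bigcup_{|w|=k}\psi_w(T)=V_k$; intersecting over $k$ gives $s([0,1])\subset V_\infty$. For the reverse inclusion, take $y\in V_\infty$. For each $k$, $y$ lies in some $\psi_{w_k}(T)$ with $|w_k|=k$. The set $s(I_{k,j_k})$ is nonempty and contained in $\psi_{w_k}(T)$, which has diameter $M^{-k}$, so $\operatorname{dist}(y,s([0,1]))\le M^{-k}$ for every $k$. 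Since $s([0,1])$ is compact, $y\in s([0,1])$.

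The step that needs the most care is the bookkeeping for the nested-containment claim $\psi_{wu}(T)\subset\psi_w(T)$ and for the consistency of the recursion with the lexicographic order. Both reduce to the base facts $\psi_i(T)\subset T$ and $\psi_i(q_2)=\psi_{i+1}(q_1)$, which are stated in the construction.
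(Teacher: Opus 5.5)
Your proposal is correct and follows the paper's proof essentially step by step. Both arguments use the polygonal approximations $s_k$, the iterated self-similarity identity, the localization of $s_n(I_{k,j})$ in $\psi_w(T)$ (which gives uniform convergence with error $M^{-k}$ and passes to the limit as \eqref{eq:self} and \eqref{eq:localize}), and a density-plus-compactness argument for $s([0,1]) = V_\infty$. The only cosmetic difference is that you approximate points of $V_\infty$ by the nonempty sets $s(I_{k,j})$, whereas the paper uses the hypotenuse endpoints.
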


\begin{proof}
    Let $s_k \colon [0,1] \rightarrow T$ be the piecewise affine constant speed map that traverses the $j$-th level-$k$ segment on $I_{k,j}$, with $s_k(0) = q_1$ and $s_k(1) = q_2$. See Figure~\ref{fig:substitution}.

    Let $w$ be the $j$-th word of length $k$. The level-$(k+l)$ segments descending from the $j$-th level-$k$ segment are the images under $\psi_w$ of the level-$l$ segments, in the same order. Hence $s_{k+l}(x) = \psi_w ( s_l ((2M)^{k}x - j + 1) )$ for $x \in I_{k,j}$.

    Now let $l \geq k$. The set $s_l(I_{k,j})$ lies in the triangle $\psi_w(T)$, and so does $s_k(I_{k,j})$, which is the hypotenuse of $\psi_w(T)$. Since $\psi_w(T)$ has $\ell_\infty$ diameter $M^{-k}$, it follows that $\norm{s_l - s_k}_\infty \leq M^{-k}$. Hence the maps $s_k$ converge uniformly to some continuous function $s$ that runs from $q_1$ to $q_2$. Letting $l \rightarrow \infty$ gives \eqref{eq:self} and \eqref{eq:localize}.

    By \eqref{eq:localize}, the image $\Gamma=s([0,1])$ lies in every $V_k$, hence $\Gamma\subset V_\infty$. Conversely, any $z\in V_\infty$ lies in a level-$k$ triangle for each $k$. Thus, $z$ is within $M^{-k}$ of an endpoint of that triangle's hypotenuse, and such endpoints belong to $\Gamma$. As $\Gamma$ is compact, we see that $z\in\Gamma$ and $\Gamma=V_\infty$.
\end{proof}

Fix $s$ as in Lemma~\ref{lem:param} and write $s = (\xi,\eta)$ for its coordinate functions. Then $s$ maps the interval $J_k$ into the triangle $\Delta_k$ by \eqref{eq:localize}. Moreover, since $(2M)^{\alpha} = M$, the ratio $\norm{s(x)-s(y)}_\infty / \abs{x-y}^{\alpha}$ is unchanged when an interval $I_{k,j}$ is carried onto $[0,1]$ by \eqref{eq:self}. Because $s$ takes values in $T = \{(x,y) : 0 \leq y \leq \min(x,\, 1-x)\}$,
    \begin{equation}\label{eq:in-T}
        0 \leq \eta \leq \min\{\xi,\ 1-\xi\} .
    \end{equation}
It follows that $\norm{s(x)-q_1}_\infty = \xi(x)$ and $\norm{q_2 - s(x)}_\infty = 1 - \xi(x)$ for every $x$.

\begin{lemma}\label{lem:gen}
    For all $u\in [0,1]$, we have $\xi(1-u) = 1 - \xi(u)$, $\eta(1-u) = \eta(u)$, and
        \begin{equation}\label{eq:gen}
            \hspace{2.2em}\left.\begin{aligned}
                s\bigl( \tfrac{2j+u}{2M} \bigr) &= \tfrac{1}{M} \bigl( j + \xi(u),\ j + \eta(u) \bigr) \\
                s\bigl( \tfrac{2j+1+u}{2M} \bigr) &= \tfrac{1}{M} \bigl( j+1-\eta(u),\ j + \xi(u) \bigr)
            \end{aligned}\;\right\} \qquad 0 \leq j < K,\hfill
        \end{equation}
    and
        \begin{equation}\label{eq:gen-down}
            \hspace{2.2em}\left.\begin{aligned}
                s\bigl( \tfrac{2j+u}{2M} \bigr) &= \tfrac{1}{M} \bigl( j + \eta(u),\ M-j-\xi(u) \bigr) \\
                s\bigl( \tfrac{2j+1+u}{2M} \bigr) &= \tfrac{1}{M} \bigl( j + \xi(u),\ M-1-j+\eta(u) \bigr)
            \end{aligned}\;\right\} \qquad K \leq j < M.\hfill
        \end{equation}
\end{lemma}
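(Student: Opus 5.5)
The plan is to read everything off the self-similarity relation \eqref{eq:self} at level $k=1$, together with explicit formulas for the similarities $\psi_i$. The symmetry claims will then come from a reflection argument applied to the approximating maps $s_k$ from the proof of Lemma~\ref{lem:param}, followed by a passage to the limit.

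\textbf{Formulas \eqref{eq:gen} and \eqref{eq:gen-down}.} Since $\psi_i$ is an orientation-preserving similarity of ratio $1/M$ with $\psi_i(q_1)=p_{i-1}$ and $\psi_i(q_2)=p_i$, it has the form
\[
\psi_i(z)=p_{i-1}+\tfrac1M R_i z,
\]
where $R_i$ is the rotation taking $e_1$ to $M(p_i-p_{i-1})$. That vector is a unit coordinate vector, so $R_i$ is one of four explicit rotations. For $x=\tfrac{2j+u}{2M}\in I_{1,2j+1}$, relation \eqref{eq:self} gives $s(x)=\psi_{2j+1}(s(u))$. Likewise $s\bigl(\tfrac{2j+1+u}{2M}\bigr)=\psi_{2j+2}(s(u))$. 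So it only remains to list $p_{2j}$, $p_{2j+1}$, $p_{2j+2}$ and the rotations.

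\emph{Case $j<K$.} Here $p_{2j}=(j/M,j/M)$; this holds for $j=0$ because $p_0=q_1$. The step to $p_{2j+1}$ is $(1/M,0)$, so $R$ is the identity and we get the first line of \eqref{eq:gen}. The next step is $(0,1/M)$, so $R$ is the rotation $(a,b)\mapsto(-b,a)$ based at $\bigl(\tfrac{j+1}{M},\tfrac jM\bigr)$, which gives the second line.

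\emph{Case $j\ge K$.} Here $p_{2j}=(j/M,(M-j)/M)$. For $j=K$ this is $p_M=(K/M,K/M)$, and it agrees since $M-K=K$. The step to $p_{2j+1}$ is $(0,-1/M)$, so $R:(a,b)\mapsto(b,-a)$, giving $\tfrac1M(j+\eta,\,M-j-\xi)$. The following step is $(1/M,0)$, with $R$ the identity based at $\bigl(\tfrac jM,\tfrac{M-1-j}{M}\bigr)$, giving the last line of \eqref{eq:gen-down}.

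All of this is routine bookkeeping.

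\textbf{Symmetry.} Let $\sigma(a,b)=(1-a,b)$. I will prove $s_k(1-x)=\sigma(s_k(x))$ for every $k$ and every $x$. Letting $k\to\infty$ and using uniform convergence then gives $\xi(1-u)=1-\xi(u)$ and $\eta(1-u)=\eta(u)$.

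\emph{Key facts at level one.} First, $\sigma(p_i)=p_{2M-i}$, which is checked directly from the definitions by pairing $j<K$ with $j'=M-1-j\ge K$. Second, if $\rho$ denotes the reversal of $[q_1,q_2]$, then $\sigma\circ\psi_i=\psi_{2M+1-i}\circ\sigma$ as maps on $T$. To see this, note that both sides are orientation-reversing similarities of ratio $1/M$ sending $q_1\mapsto p_{2M+1-i}$ and $q_2\mapsto p_{2M-i}$. Such a map is determined by the images of two points plus its orientation.

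\emph{Induction.} The $j$-th level-$k$ segment is $\psi_w([q_1,q_2])$. Applying $\sigma$ and the commutation relation letter by letter, $\sigma$ carries it, with reversed direction, onto the segment with word $\bar w$, where each letter $i$ is replaced by $2M+1-i$. That is exactly the $((2M)^k+1-j)$-th level-$k$ segment. Since $s_k$ has constant speed, this gives $s_k(1-x)=\sigma(s_k(x))$.

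I expect the main obstacle to be the orientation check in the commutation relation $\sigma\psi_i=\psi_{2M+1-i}\sigma$, that is, confirming that the reflected triangle still lies to the left of the reversed hypotenuse. If it is cleaner, I would instead verify the relation on the three vertices of $T$ directly.
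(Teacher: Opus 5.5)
Your proposal is correct and follows the paper's proof: you read off \eqref{eq:gen} and \eqref{eq:gen-down} from \eqref{eq:self} at level one by identifying each $\psi_i$ as $M^{-1}$ times the identity or a quarter-turn (in one direction or the other), followed by a translation, and you get the symmetry from the reflection in $x=\tfrac12$ reversing the order of the segments, which you make rigorous through $\sigma\psi_i=\psi_{2M+1-i}\sigma$ and the approximants $s_k$. The orientation check you flag as the main obstacle is already settled by your own two-point argument, since two orientation-reversing similarities that agree at $q_1$ and $q_2$ coincide on all of $\RR^2$.
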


\begin{proof}
    The four displayed identities are \eqref{eq:self} with $k=1$. The segments in question run from $p_{2j}$ to $p_{2j+1}$ and from $p_{2j+1}$ to $p_{2j+2}$, and they are traversed to the right and upward when $j < K$, downward and to the right when $j \geq K$. The corresponding similarity $\psi_i$ is therefore $M^{-1}$ times the identity, the quarter-turn, or its inverse, followed by the translation carrying $q_1$ to the initial point of the segment. The first assertion holds because reflection in the line $x = \tfrac12$ carries the system of triangles to itself and reverses the order of the segments.
\end{proof}

\begin{lemma}\label{lem:ends}
    Let $\lambda := 2^{1-\alpha} = 2^{1/(m+1)}$. For all $x \in [0,1]$,
        \begin{equation}\label{eq:Lambda}
            \norm{s(x)-q_1}_\infty \leq \lambda x^{\alpha}
            \quad\text{and}\quad
            \norm{q_2-s(x)}_\infty \leq \lambda (1-x)^{\alpha} .
        \end{equation}
\end{lemma}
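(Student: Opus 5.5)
By the first assertion of Lemma~\ref{lem:gen}, $\norm{q_2-s(x)}_\infty = 1-\xi(x) = \xi(1-x)$. So the second inequality in \eqref{eq:Lambda} follows from the first applied at $1-x$, and it suffices to prove $\xi(x)\leq \lambda x^\alpha$ for all $x\in[0,1]$. (Recall $\norm{s(x)-q_1}_\infty=\xi(x)$ by \eqref{eq:in-T}.) The case $x=0$ is trivial since $s(0)=q_1$.

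\textbf{Step 1 (reduction by scaling).} The first map $\psi_1$ is the homothety $z\mapsto z/M$ fixing $q_1$. The case $j=0$ of \eqref{eq:gen} then gives $\xi(u/(2M))=\xi(u)/M$. Since $(2M)^\alpha=M$, the ratio $\xi(x)/x^\alpha$ is unchanged under $x\mapsto x/(2M)$. Iterating, every $x\in(0,1]$ is carried to a point of $[1/(2M),1]$ with the same ratio. Hence it suffices to prove the bound for $x\in[1/(2M),1]$.

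\textbf{Step 2 (crude piecewise bounds).} Write $x\in[i/(2M),(i+1)/(2M)]$ with $1\leq i\leq 2M-1$, and use \eqref{eq:gen}, \eqref{eq:gen-down} with $0\leq\xi,\eta\leq1$.

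When $i=2j+1$ is odd, $\xi(x)\leq (j+1)/M$ in both halves; in the descending half this uses $\xi=(j+\xi(u))/M$. Here $x\geq t:=(2j+1)/(2M)$, so it is enough to check
\[ t+\tfrac1{2M}\leq \lambda t^\alpha \quad\text{for } t\in[\tfrac1{2M},1]. \]

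When $i=2j$ is even with $j\geq1$, in the ascending half we have $\xi(x)=(j+\xi(u))/M\leq (j+1)/M$. In the descending half $\xi(x)=(j+\eta(u))/M\leq (j+1)/M$. Here $x\geq t:=j/M$, so it is enough to check
\[ t+\tfrac1M\leq\lambda t^\alpha \quad\text{for } t\in[\tfrac1M,1]. \]

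\textbf{Step 3 (the elementary inequalities).} In both cases the function $t\mapsto \lambda t^\alpha - t$ is concave. So each inequality needs checking only at the endpoints of its range.

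At $t=1$, both reduce to $2^{1/(m+1)}-1\geq 1/M = 2^{-m}$. This holds since $2^{1/(m+1)}-1\geq \ln 2/(m+1)\geq 2^{-m}$ for $m\geq2$.

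At the left endpoints, use $(1/(2M))^\alpha=1/M$ and $(1/M)^\alpha = 2^\alpha/M$. The first inequality becomes $1/M\leq\lambda/M$. The second becomes $2/M\leq \lambda 2^\alpha/M = 2/M$, which is an equality.

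This tight left endpoint is where the constant $\lambda=2^{1-\alpha}$ is forced. It is the one place I expect care is needed: the crude bound $\xi\leq(j+1)/M$ must be paired with the correct lower bound on $x$ for each half-piece, namely $j/M$ rather than something smaller. If the bookkeeping fails somewhere, I would instead bootstrap using the finite constant $C=\sup \xi(x)/x^\alpha$. This $C$ is finite by Step 1, and the recursion $\xi(x)=(j+\xi(u))/M\leq (j+\min(1,Cu^\alpha))/M$ would yield $C\leq\lambda$.
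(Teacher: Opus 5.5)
Your proof is correct, and it takes a different route from the paper. The paper first proves, by induction over the approximations $s_k$, the concave majorant $\xi \le \widehat\xi(x) = \min\{x+\tfrac{1}{2M},\ \tfrac{x+1}{2}\}$. After the same scaling reduction you use, it checks the ratio $\widehat\xi(x)/x^{\alpha}$ at the endpoints of the two affine pieces of $\widehat\xi$.

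You bypass that induction entirely. A single application of \eqref{eq:gen}--\eqref{eq:gen-down}, together with the trivial containment $s([0,1])\subset T$, gives the step majorant $\xi \le (\lfloor i/2\rfloor+1)/M$ on the $i$-th level-one interval. Pairing it with the correct lower bound on $x$ ($\tfrac{2j+1}{2M}$ for odd $i$, $\tfrac{j}{M}$ for even $i$) reduces everything to two concave-minus-affine inequalities. I checked your case bookkeeping, including the split at $j=K$, and it is right.

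What each route buys: yours is shorter, and it shows exactly where $\lambda=2^{1-\alpha}$ comes from, namely equality at $t=1/M$. The paper's finer majorant leaves slack. Its argument actually yields $\sup_x \xi(x)/x^{\alpha} \le \max\{1,\ (1-\tfrac{1}{2M})(1-\tfrac1M)^{-\alpha}\}$, which is strictly smaller than $\lambda$. That would matter if one tried to improve the constants in Lemma~\ref{lem:holder}.

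There is one small slip at $t=1$. The chain $2^{1/(m+1)}-1 \ge \ln 2/(m+1) \ge 2^{-m}$ fails for $m=2$, since $\ln 2/3 \approx 0.231 < 1/4$. The needed inequality $2^{1/3}\ge 5/4$ still holds because $(5/4)^3 = 125/64 < 2$, and your chain is valid for $m\ge 3$. Also, at $t=1$ the odd case only needs $\lambda-1\ge \tfrac{1}{2M}$, which the even case implies. Finally, your fallback bootstrap paragraph is unnecessary and can be dropped.
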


\begin{proof}
    Put $\widehat\xi(x) := \min\bigl\{ x + \tfrac{1}{2M},\ \tfrac{x+1}{2} \bigr\}$ for $0 \leq x \leq 1$; see Figure~\ref{fig:xi}. It is increasing and concave, with $\widehat\xi(1) = 1$, $\widehat\xi \leq 1$ and $2\widehat\xi(x) \leq x+1$. We claim that $\xi \leq \widehat\xi$.
    
    \begin{figure}[t]
    \centering
    \includegraphics[width=.60\textwidth]{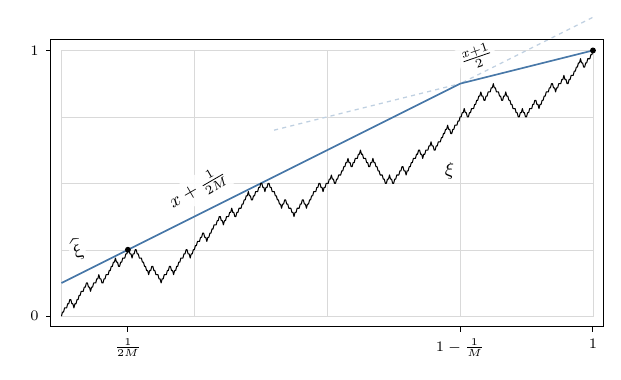}
    \caption{Graphs of $\xi$ (the first coordinate of $s$, drawn using its level 3 approximation) and $\widehat\xi$ when $m = 2$.}
    \label{fig:xi}
\end{figure}

    It is enough to prove the claim for the approximations $s_k = (\xi_k, \eta_k)$ of Lemma~\ref{lem:param} and let $k \rightarrow \infty$. Each $s_k$ takes values in $T$, and \eqref{eq:in-T} holds for $\xi_k$ and $\eta_k$. In particular $\eta_k \leq \tfrac12$. For $k = 0$ it holds because $\xi_0(x) = x \leq \widehat\xi(x)$. Suppose it holds for $s_k$ and write $x = (i+u)/(2M)$ with $0 \leq i < 2M$ and $u \in [0,1]$. Substituting the inductive hypothesis into \eqref{eq:gen} and \eqref{eq:gen-down} leaves four cases.
    \begin{itemize}\setlength{\itemsep}{0pt}\setlength{\parskip}{0pt}\setlength{\topsep}{2pt}
        \item $i = 2j$ with $j < K$. Then $\xi_{k+1}(x) \leq (j + \widehat\xi(u))/M$. Since $x \leq \frac{2j+1}{2M} \leq \frac{M-1}{2M}$, we have $\widehat\xi(x) = x + \frac{1}{2M} = \frac{2j+u+1}{2M}$. The required inequality is $2\widehat\xi(u) \leq u+1$.
        \item $i = 2j+1$ with $j < K$. Then $\xi_{k+1}(x) \leq (j+1)/M$ by \eqref{eq:in-T}, while $\widehat\xi$ is increasing and $\widehat\xi\bigl( \frac{2j+1}{2M} \bigr) = \frac{2j+2}{2M} = \frac{j+1}{M}$.
        \item $i = 2j$ with $j \geq K$. Then $\xi_{k+1}(x) \leq (j+\tfrac12)/M = \frac{2j+1}{2M}$, while $\widehat\xi(x) \geq \widehat\xi(j/M) = \min\bigl\{ \frac{2j+1}{2M},\ \frac{j+M}{2M} \bigr\}$, and $j \leq M-1$.
        \item $i = 2j+1$ with $j \geq K$. Then $\xi_{k+1}(x) \leq (j+\widehat\xi(u))/M$, and the two branches of $\widehat\xi(x)$ call for $2\widehat\xi(u) \leq u+2$ and $4\widehat\xi(u) \leq 2M+1+u-2j$. The first holds because $\widehat\xi \leq 1$, and the second because $2\widehat\xi(u) \leq u+1$ and $j \leq M-1$.
    \end{itemize}

    Now let $0 < x \leq 1$. Taking $j = 0$ in \eqref{eq:gen} gives $\xi(x) = \xi(2Mx)/M$ for $x \leq \frac{1}{2M}$. The ratio $\xi(x)/x^{\alpha}$ is therefore unchanged when $x$ is replaced by $2Mx$, and we may assume that $x \geq \frac{1}{2M}$. On each of the two affine pieces of $\widehat\xi$ the ratio $\widehat\xi(x)/x^{\alpha}$ is largest at an endpoint, because the derivative of $(c+kx)x^{-\alpha}$ has the sign of $(1-\alpha)kx - \alpha c$. At $x = \frac{1}{2M}$ and at $x = 1$ that ratio equals $1$, and at the corner $x = 1 - \frac1M$ it equals $\bigl( 1 - \tfrac{1}{2M} \bigr)\bigl( 1 - \tfrac1M \bigr)^{-\alpha} \leq \tfrac{2M-1}{2M-2} = 1 + \tfrac{1}{2M-2} \leq 1 + \tfrac{1}{2(m+1)} \leq 2^{1/(m+1)} = \lambda$, using $(1-\frac1M)^{\alpha} \geq 1 - \frac1M$, then $2M - 2 \geq 2(m+1)$, then $2^{t} \geq 1 + t\ln 2$ with $t = \frac{1}{m+1}$ and $\ln 2 \geq \frac12$. We conclude that $\xi(x) \leq \lambda x^{\alpha}$, the first bound in \eqref{eq:Lambda}. The second follows from the first and Lemma~\ref{lem:gen}, since $1 - \xi(x) = \xi(1-x)$.
\end{proof}

\begin{lemma}\label{lem:holder}
    The map $s$ is $\alpha$-H\"older with $\Hold_\alpha(s) \leq 4^{1/(m+1)}$. In particular, $V_\infty$ is an $\alpha$-H\"older curve.
\end{lemma}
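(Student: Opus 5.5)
Note first that $4^{1/(m+1)}=\lambda^2$, where $\lambda=2^{1/(m+1)}$ is the constant of Lemma~\ref{lem:ends}. This suggests the plan: split $s(x)-s(y)$ at a common vertex and apply Lemma~\ref{lem:ends} to each piece.

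Fix $0\le x<y\le1$. The ratio $\norm{s(x)-s(y)}_\infty/\abs{x-y}^\alpha$ is invariant under the rescaling \eqref{eq:self}, because $(2M)^\alpha=M$. So we may zoom in: let $k$ be the largest integer such that $x,y$ lie in a common interval $I_{k,j}$. Applying \eqref{eq:self} with that $k$, we reduce to the case where $x$ and $y$ do not lie in a common first-level interval $I_{1,i}$. We then write $x\in I_{1,a}$ and $y\in I_{1,b}$ with $a<b$.

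\emph{Adjacent case $b=a+1$.} The two pieces share the point $z=p_a=s(a/(2M))$. By \eqref{eq:self} and Lemma~\ref{lem:ends},
\[
\norm{s(x)-z}_\infty\le\tfrac1M\lambda\bigl((2M)(\tfrac{a}{2M}-x)\bigr)^\alpha=\lambda\bigl(\tfrac{a}{2M}-x\bigr)^\alpha,
\]
and similarly $\norm{s(y)-z}_\infty\le\lambda(y-\tfrac a{2M})^\alpha$. Concavity of $t\mapsto t^\alpha$ gives $u^\alpha+v^\alpha\le 2^{1-\alpha}(u+v)^\alpha$, so the total is at most $\lambda\cdot\lambda\,\abs{x-y}^\alpha=4^{1/(m+1)}\abs{x-y}^\alpha$.

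This crude triangle inequality can be refined. By Lemma~\ref{lem:gen}, the two pieces are a copy and a rotated copy of $s$, so the displacements from $z$ are controlled coordinatewise by $\xi$ and $\eta$. One can then keep the better estimate $\max$ rather than a sum in one coordinate when the pieces turn a corner. This refinement is not needed for the stated constant, but it may be needed in the next case.

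\emph{Non-adjacent case $b\ge a+2$.} Here $\abs{x-y}\ge (b-a-1)/(2M)$. I would bound $\norm{s(x)-s(y)}_\infty$ by the $\ell_\infty$ distance between the triangles $\Delta_a$ and $\Delta_b$ plus their sizes, using the explicit staircase geometry of the $p_i$. The staircase is monotone in each half: it climbs diagonally for $j<K$ and descends for $j\ge K$. Hence the triangles $\Delta_a,\dots,\Delta_b$ lie in a box of side about $(b-a+1)/(2M)$, roughly $\lceil (b-a+1)/2\rceil/M$. Comparing this with $\bigl((b-a-1)/(2M)\bigr)^\alpha$ leaves only finitely many small values of $b-a$ to check. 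For large $b-a$ the comparison is easy, since $t\le t^\alpha$ scaled appropriately.

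\emph{Main obstacle.} The delicate step is the small gaps, $b-a=2$ or $3$, especially when $x$ is near the left end of its interval and $y$ near the right end. There the interval length is barely larger than one piece while the image spans nearly two triangles. It is also delicate around the peak $p_{M-1}=p_{M+1}$, where the climbing and descending triangles fold back onto each other.

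For these cases I would first try splitting at an intermediate vertex: write $s(x)-p_{a}$, then $p_a-p_{b-1}$, then $p_{b-1}-s(y)$. The two end pieces are bounded by Lemma~\ref{lem:ends}, and the middle one is exact. One then optimizes $\lambda u^\alpha+c+\lambda v^\alpha$ subject to $u+v+(b-a-1)/(2M)=\abs{x-y}$. If this falls short of $\lambda^2$, the fallback is the coordinatewise bounds from \eqref{eq:in-T}, namely $0\le\eta\le\min\{\xi,1-\xi\}$. These show that the end pieces protrude into the triangles at most half as far in the perpendicular direction.

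Finally, surjectivity onto $V_\infty$ was already established in Lemma~\ref{lem:param}, so $V_\infty$ is an $\alpha$-H\"older curve.
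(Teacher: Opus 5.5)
You reduce by self-similarity and handle the adjacent case $b=a+1$ exactly as the paper does. For $b\ge a+2$ you also name the right two tools. Write $\nu:=b-a-1$ and $r:=2M\abs{x-y}-\nu\in[0,2]$, so the ratio to bound is $M\norm{s(x)-s(y)}_\infty/(\nu+r)^\alpha$.

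First, splitting through $p_a$ and $p_{b-1}$ gives $M\norm{s(x)-s(y)}_\infty\le\lambda^2r^\alpha+\lceil\nu/2\rceil$, since at most $\lceil\nu/2\rceil$ of the $\nu$ alternating steps lie along either axis. Second, $s(x)\in\Delta_a$ and $s(y)\in\Delta_b$ give $M\norm{s(x)-s(y)}_\infty\le M\diam(\Delta_a\cup\Delta_b)\le(\nu+3)/2$. The extra $\tfrac12$ comes from the apex protrusions, which your box of side $\lceil(b-a+1)/2\rceil/M$ omits; for $m\ge3$, $a=2$, $b=5$ the diameter is $5/(2M)$.

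At the decisive point, though, you only say you would try the first bound and fall back on the second, and neither works on its own. The vertex bound fails when $r$ is large: for $\nu=1$, $r=2$, $m=2$ it needs $\lambda^2(3^\alpha-2^\alpha)\ge1$, but the left side is about $0.78$. The diameter bound fails when $r$ is small: for $\nu=1$ and $r\to0$ it needs $2\le\lambda^2=4^{1/(m+1)}$. For $\nu=2$ and $m\ge3$ it needs $5/2\le2^{(m+2)/(m+1)}$, which also fails. So the case carrying all the difficulty is left unproved.

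The missing idea is to take the minimum of the two bounds and split on the size of $r$, at the threshold $\rho:=2^{-2/m}$ defined by $\lambda^2\rho^\alpha=1$. Both regimes then follow from one inequality, $\lambda^2(\nu+\rho)^\alpha\ge(\nu+3)/2$ for $1\le\nu\le2M-2$.

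If $r\le\rho$, then $\lambda^2[(\nu+r)^\alpha-r^\alpha]$ decreases in $r$. It is therefore at least $\lambda^2(\nu+\rho)^\alpha-1\ge(\nu+1)/2\ge\lceil\nu/2\rceil$, so the vertex bound suffices. If $r\ge\rho$, then $(\nu+3)/2\le\lambda^2(\nu+\rho)^\alpha\le\lambda^2(\nu+r)^\alpha$, so the diameter bound suffices.

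The left side is concave and the right side affine in $\nu$, so only $\nu=1$ and $\nu=2M-2$ need checking. At $\nu=1$, raising to the power $1/\alpha$ reduces it to $(1-2^{-1/m})^2\ge0$. At $\nu=2M-2$ it follows from $(2M-2)^\alpha\ge M-1$ and $\lambda^2\ge1+3/(2M-2)$.

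This single inequality covers every $\nu\ge1$ uniformly, replacing your finitely-many-small-cases and $t\le t^\alpha$ heuristics. The step counts use only that consecutive steps alternate between horizontal and vertical with length $1/M$. So the fold at $p_{M-1}=p_{M+1}$ needs no separate treatment.
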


\begin{proof}
    Let $x < y$ in $[0,1]$. The intervals of level $n$ have length $(2M)^{-n}$, and there is a largest $n$ for which some interval of level $n$ contains both $x$ and $y$. Carrying that interval onto $[0,1]$ leaves the ratio $\norm{s(x)-s(y)}_\infty/ \abs{x-y}^{\alpha}$ unchanged. We may therefore assume that $n = 0$. Then $x \in J_k$ and $y \in J_{k'}$ with $k < k'$. Let $\nu := k'-k-1$ be the number of intervals of level one between them, put $a := 2M(r_k - x)$ and $b := 2M(y - \ell_{k'})$, both in $[0,1]$, and put $r := a+b$. Then $\abs{x-y} = (\nu+r)/(2M)$, and since $(2M)^{\alpha} = M$ the ratio to be bounded is $M\norm{s(x)-s(y)}_\infty/(\nu+r)^{\alpha}$. Carrying $J_k$ and $J_{k'}$ onto $[0,1]$ and applying \eqref{eq:Lambda} at both ends gives
        \begin{equation}\label{eq:ends}
            M\norm{s(x)-s(r_k)}_\infty \leq \lambda a^{\alpha}
            \quad\text{and}\quad
            M\norm{s(\ell_{k'})-s(y)}_\infty \leq \lambda b^{\alpha} .
        \end{equation}
    If $\nu = 0$, then $r_k = \ell_{k'}$ and the triangle inequality with \eqref{eq:ends} gives
        \[
            M\norm{s(x)-s(y)}_\infty \leq M\norm{s(x)-s(r_k)}_\infty + M\norm{s(\ell_{k'})-s(y)}_\infty \leq \lambda \bigl( a^{\alpha} + b^{\alpha} \bigr).
        \]
    By H\"older's inequality with conjugate exponents $m+1$ and $1/\alpha$,
        \begin{equation}\label{eq:conc}
            \lambda \bigl( a^{\alpha} + b^{\alpha} \bigr) = \lambda(1,1)\cdot(a^\alpha,b^\alpha) \leq \lambda^{2} (a+b)^{\alpha}
            \quad \text{for all } a, b \geq 0 .
        \end{equation}
    Since $a+b = r$, the ratio is at most $\lambda^{2}$.

    Suppose next that $\nu \geq 1$. Let $D := \norm{s(r_k)-s(\ell_{k'})}_\infty$ and $\delta := \diam(\Delta_k \cup \Delta_{k'})$. Estimating through $s(r_k)$ and $s(\ell_{k'})$ by \eqref{eq:ends} and \eqref{eq:conc}, or else that $s(x) \in \Delta_k$ and $s(y) \in \Delta_{k'}$, gives
        \begin{equation}\label{eq:sep}
            M \norm{s(x)-s(y)}_\infty \leq \min\bigl\{ \lambda^{2} r^{\alpha} + DM,\ \delta M \bigr\} .
        \end{equation}
    Consecutive points $p_i$ lie $1/M$ apart and the steps alternate between horizontal and vertical. At most $\lceil \nu/2 \rceil$ of the $\nu$ steps from $p_k$ to $p_{k'-1}$ point in either direction, and $DM \leq \lceil \nu/2 \rceil$. The set $\Delta_k \cup \Delta_{k'}$ is spanned by the $\nu+2$ steps from $p_{k-1}$ to $p_{k'}$ together with the vertices opposite the hypotenuses of $\Delta_k$ and $\Delta_{k'}$, each of which lies $1/(2M)$ beyond its hypotenuse in the direction perpendicular to its own step. A coordinate that receives both of those two contributions is perpendicular to both extreme steps, and at most $(\nu+1)/2$ of the $\nu+2$ steps point along it. A coordinate that receives one of them takes at most $(\nu+2)/2$ steps, and in either case $\delta M \leq (\nu+3)/2$.

    Let $\rho := 2^{-2/m}$. Then $\lambda^{2}\rho^{\alpha} = 1$, and we show that
        \begin{equation}\label{eq:key}
            \lambda^{2}(\nu+\rho)^{\alpha} \geq \tfrac{\nu+3}{2} \qquad (1 \leq \nu \leq 2M-2) .
        \end{equation}
    Its two sides are a concave and an affine function of $\nu$, and only the two endpoints need be checked. At $\nu = 1$ the claim is that $\lambda^{2}(1+2^{-2/m})^{\alpha} \geq 2$, that is, $(1+2^{-2/m})^{\alpha} \geq 2^{(m-1)/(m+1)}$; raising both sides to the power $1/\alpha$ turns this into $1 + 2^{-2/m} \geq 2 \cdot 2^{-1/m}$, which is $(1 - 2^{-1/m})^{2} \geq 0$. At $\nu = 2M-2$ it suffices to drop $\rho$ and prove $2\lambda^{2}(2M-2)^{\alpha} \geq 2M+1$. Here $(2M-2)^{\alpha} = M\bigl(1-\frac1M\bigr)^{\alpha} \geq M - 1$. It therefore suffices that $2\lambda^{2}(M-1) \geq 2M+1$, that is, that $\lambda^{2} \geq 1 + \frac{3}{2M-2}$. For $m = 2$ the two sides are $\lambda^{2} = 4^{1/3}$ and $\tfrac32$. For $m \geq 3$, $2M-2 \geq 3(m+1)$ gives $\frac{3}{2M-2} \leq \frac{1}{m+1} \leq \lambda^{2}-1$.

    Both entries of \eqref{eq:sep} are now handled at once. If $r \leq \rho$, we use the first entry: the quantity $\lambda^{2}\bigl[ (\nu+r)^{\alpha}-r^{\alpha} \bigr]$ decreases in $r$ and is at least $\lambda^{2}(\nu+\rho)^{\alpha} - 1$, which by \eqref{eq:key} is at least $\frac{\nu+1}{2} \geq \lceil \nu/2 \rceil$. If $r \geq \rho$, we use the second entry, which is at most $(\nu+3)/2 \leq \lambda^{2}(\nu+\rho)^{\alpha} \leq \lambda^{2}(\nu+r)^{\alpha}$. Either way the ratio is at most $\lambda^{2} = 4^{1/(m+1)}$.
\end{proof}

\begin{proof}[Proof of Proposition~\ref{thm:curve}]
    Since a translation changes neither H\"older constants nor surjectivity, we may use coordinates in which $Q = [0,1]^{2}$. Listed counter-clockwise from the bottom left, the corners of $Q$ are then $q_1$, $q_2$, $q_3 := (1,1)$, and $q_4 := (0,1)$. These are fixed points of $\phi_{0,0}$, $\phi_{M-1,0}$, $\phi_{M-1,M-1}$, and $\phi_{0,M-1}$. Hence
        \begin{equation}\label{eq:corners}
            q_1,\, q_2,\, q_3,\, q_4 \in E_\infty .
        \end{equation}
    Let $R$ denote the counter-clockwise rotation by $90^{\circ}$ about the center of $Q$. Then $R(q_1)=q_2$, $R(q_2)=q_3$, $R(q_3)=q_4$, and $R(q_4)=q_1$. Let $g$ be the concatenation of the curves $s$, $R\circ s$, $R^2\circ s$, and $R^3\circ s$ of Lemma~\ref{lem:param}, each traversed on one of four intervals of equal length. Figure~\ref{fig:stages} shows three levels of approximation to $g$ when $m = 2$.
    
    \begin{figure}[t]
    \centering
    \includegraphics[width=\textwidth]{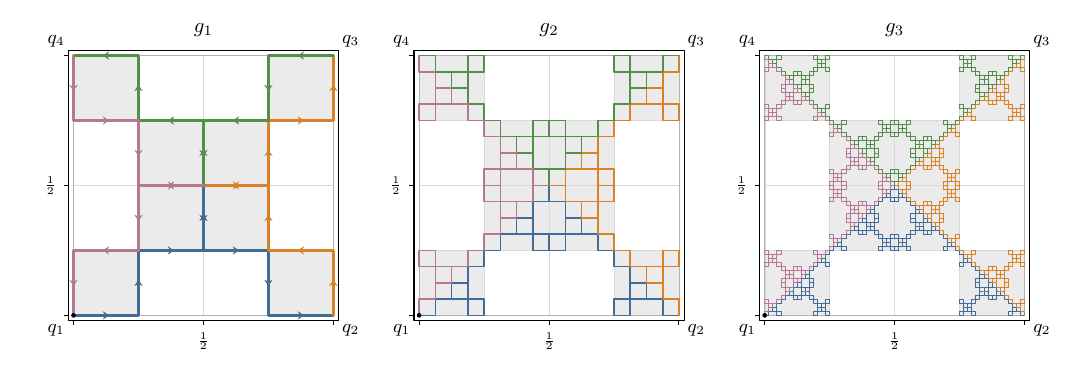}
    \caption{The images of $s_1$, $s_2$, and $s_3$ together with their three rotated copies when $m = 2$: $s$ occupies the lower triangle $T$, $Rs$ the right, $R^2s$ the upper, and $R^3s$ the left.}
    \label{fig:stages}
\end{figure}

    Let $x < y$ in $[0,1]$. Each $R^{j}$ is an isometry and each of the four intervals has length $\tfrac14$. An estimate for $s$ transfers to one of them at the cost of a factor $4^{\alpha}$. If $x$ and $y$ lie in a common interval, Lemma~\ref{lem:holder} bounds the ratio by $4^{\alpha} \cdot 4^{1/(m+1)}$. If they lie in adjacent intervals meeting at $\tau$, then \eqref{eq:Lambda} bounds $\norm{g(x)-g(y)}_\infty$ by $4^{\alpha}\lambda\bigl( (\tau-x)^{\alpha} + (y-\tau)^{\alpha} \bigr)$, and \eqref{eq:conc} with $a = \tau-x$ and $b = y-\tau$ gives $4^{\alpha} \cdot 4^{1/(m+1)}$. Otherwise $\abs{x-y} \geq \tfrac14$ while $\norm{g(x)-g(y)}_\infty \leq \diam Q = 1$, and the ratio is at most $4^{\alpha}$. Therefore,
$\Hold_{\alpha}(g) \leq 4^{\alpha} \cdot 4^{1/(m+1)} = 4^{(m+1)/(m+1)} = 4$.

    It remains to identify the image of $g$. The triangles $\Delta_{2j+1}$ and $\Delta_{2j+2}$ lie in $Q_{j,j}$ when $j < K$ and in $Q_{j,M-1-j}$ when $j \geq K$, and those indices belong to $\mathcal{A}$. The diagonals of such a square cut it into four triangles, each congruent to $M^{-1}T$, and the $8M$ triangles obtained in this way are exactly the $8M$ triangles $R^{j}\Delta_k$. Let $W := V_\infty \cup RV_\infty \cup R^{2}V_\infty \cup R^{3}V_\infty$, the image of $g$. We want to show that $W = E_\infty$. Since $V_\infty$ is the attractor of $\mathcal{F}$, $W = \bigcup_{i \in \mathcal{A}} \phi_i(W)$. Since an IFS has a unique attractor by Hutchinson's theorem, $W = E_\infty$.
\end{proof}

\section{Stong's transformation and the space-filling surface} \label{sec:stong}

\subsection{\texorpdfstring{$M$}{M}-adic representations} \label{subsec:base-M}

Recall $M=2^m$. The \emph{$M$-adic representation} of an integer $v$, called the generalized base-$M$ expansion in~\cite{stong1998}, is the infinite string $\cdots d_2\, d_1\, d_0$ with digits $d_i \in \{0, 1, \ldots, M-1\}$ obtained as follows. If $v \geq 0$, take the ordinary base-$M$ expansion of $v$ and pad it with infinitely many leading zeros. If $v < 0$, take the ordinary expansion of $-v-1$ and replace each digit $d$ by its complement $M-1-d$. The digits are eventually all $0$ when $v \geq 0$ and eventually all $M-1$ when $v < 0$. We call this eventual digit the \emph{stable digit} of $v$.

Since only the stable digit repeats infinitely often, an $M$-adic representation is determined by finitely many digits, and we write
        \[
            v = \bigl[ \overline{d_p},\, d_{p-1},\, \ldots,\, d_0 \bigr]_M
        \]
for the integer whose $M$-adic representation is $d_{p-1}, \ldots, d_0$ preceded by infinitely many copies of $d_p$. The overline marks the stable digit, which lies in $\{0, M-1\}$. It is convenient to let $d_i := d_p$ for every $i \geq p$ as well. Then $d_i$ is defined for all $i \geq 0$ and names the $i$-th digit of $v$ whether or not it is written. The two extreme cases are $0 = [\overline{0}]_M$ and $-1 = [\overline{M-1}]_M$, where no digit appears outside the overline. We will use repeatedly that complementing every digit of the expansion of $v$ produces the expansion of $-v-1$.

It may help to see the expansion in the smallest case. Take $m = 2$. Then $M = 4$ and each digit lies in $\{0,1,2,3\}$. Since $22 = 1 \cdot 16 + 1 \cdot 4 + 2$ and $5 = 1 \cdot 4 + 1$,
$22 = [ \overline{0},\, 1,\, 1,\, 2 ]_4$ and $-6 = [ \overline{3},\, 2,\, 2 ]_4$,
the digits of $-6$ being the complements $d \mapsto 3-d$ of those of $5 = -(-6)-1$, the stable digit included.

\subsection{Stong's set and bijection} \label{subsec:Xm-F}

Following \cite{stong1998}, set
        \begin{equation}\label{Xm-def} X_m := \bigl\{ (a,b) \in \ZZ^{2} : d_i = c_i \text{ or } d_i = M - 1 - c_i \text{ for every } i \geq 0 \bigr\}\end{equation}
    where $a = [\overline{d_p},\, d_{p-1},\, \ldots,\, d_0]_M$ and $b = [\overline{c_q},\, c_{q-1},\, \ldots,\, c_0]_M$ are the $M$-adic representations of $a$ and $b$. We call this the \emph{digit condition}. Equivalently, $(a,b) \in X_m$ precisely when $(d_i, c_i)$ belongs to the alphabet $\mathcal{A}$ of \eqref{alphabet} for every $i \geq 0$, and Lemma~\ref{lem:gamma} shows that this agreement is no coincidence.

    For $k \geq 0$, put
$h_k := M^{k}/2$ and $S_k := X_m \cap [-h_k, h_k)^{2}$.
    Then $S_k$ is the part of $X_m$ inside a square of side $M^{k}$ centered at the origin, and $S_0 = \{(0,0)\}$. Fixing $b \in S_k$ leaves two admissible choices in each of the $k$ lowest digits of $a$, and since $a\in S_k$, the stable digit is forced and thus all higher digits, as well. Exactly one of the two stable digits places $a$ in the box. Hence $\# S_k = M^{k} \cdot 2^{k} = 2^{(m+1)k}$. Motivated by Stong~\cite{stong1998}, consider the linear map $L \colon \RR^{2m} \to \RR^{m+1}$ defined by
        \begin{equation}\label{eq:L}
            L\bigl((a_1,b_1), \ldots, (a_m,b_m)\bigr) := \Bigl( \textstyle\sum_{j=1}^{m} 2^{j-1} a_j,\ b_1, \ldots, b_m \Bigr).
        \end{equation}
    We write $P := \sum_{j=1}^{m} 2^{j-1} a_j$ for the first coordinate of an image point. Stong's map is the restriction $F := L|_{X_m^{m}}$. The digit condition plays no part in the definition of $L$. Its role is to make $F$ a bijection, and in Section~\ref{subsec:surface} we apply $L$ to points that are not lattice points.

\begin{prop}[Stong~\cite{stong1998}]\label{prop:bij}
    The map $F \colon X_m^{m} \to \ZZ^{m+1}$ is a bijection.
\end{prop}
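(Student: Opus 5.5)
We would prove Proposition~\ref{prop:bij} by solving for $(a_1,\dots,a_m)$ one binary digit at a time, working first modulo powers of $M$ (equivalently, $2$-adically) and then checking that the solution consists of genuine integers. The starting observation is that, since $M=2^m$, complementing an $M$-adic digit is a bitwise operation: $M-1-c$ is obtained from $c$ by flipping all $m$ of its binary bits. So, once $b_j=[\overline{c_q},\dots,c_0]_M$ is fixed, the digit condition says precisely that $a_j = b_j \oplus E_j$. Here $\oplus$ is bitwise XOR of two's-complement (i.e.\ $2$-adic) expansions, and $E_j$ is a string whose $i$-th $M$-adic digit is $e_{j,i}(M-1)$ for some bits $e_{j,i}\in\{0,1\}$. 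Given $(P,b_1,\dots,b_m)\in\ZZ^{m+1}$, we must therefore show there is exactly one array of bits $(e_{j,i})$ such that each $a_j$ is an integer and $\sum_j 2^{j-1}a_j = P$.

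\emph{Triangular solve.} Index the binary bits of $P$ by $t=im+r$ with $0\le r<m$, so that $t$ lies in the $i$-th $M$-adic block. In $\sum_j 2^{j-1}a_j$, bit $t$ receives a contribution from bit $t-j+1$ of $a_j$ for each $j$, plus a carry coming from bits below $t$. We sort these contributions according to which unknown controls them:
\begin{itemize}
\item for $j=r+1$, the bit used is bit $im$ of $a_{r+1}$, which is the lowest bit of block $i$ and is flipped exactly by $e_{r+1,i}$;
\item for $j\le r$, the bit used lies in block $i$ of $a_j$ and is controlled by $e_{j,i}$, an unknown already fixed at an earlier step;
\item for $j>r+1$, the bit used lies in block $i-1$, which is already fixed;
\item the carry depends only on bits below $t$, hence only on unknowns already fixed.
\end{itemize}
So processing $t=0,1,2,\dots$ in order, the $t$-th bit of $P$ determines $e_{r+1,i}$ uniquely. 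Each bit of $P$ fixes exactly one $e$, and each $e$ is fixed by exactly one bit of $P$. This shows that the $2^{mk}$ choices of $(e_{j,i})_{i<k}$ map bijectively onto the residues of $P$ modulo $M^k$, for every $k$. In particular $F$ is injective, since two preimages would agree modulo every $M^k$. It also yields a unique compatible $2$-adic solution $(a_1,\dots,a_m)$.

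\emph{Integrality, the expected main obstacle.} For surjectivity we must show that the $2$-adic solution consists of integers, i.e.\ that for each $j$ the bits $e_{j,i}$ are eventually constant in $i$, so that $a_j$ has a stable digit. Beyond some block $i_0$, every bit of $P$ and every digit of every $b_j$ is constant. The solve above then becomes a deterministic recursion whose state is the carry entering block $i$. This carry is bounded (by roughly $m$ in absolute value), so the recursion has finitely many states and is eventually periodic. Two approaches suggest themselves, and they would be tried in this order:
\begin{enumerate}
\item Show directly, by a monotonicity argument on carries under constant input, that the carry stabilizes. Once it does, the $e$'s are eventually constant.
\item Construct an integer preimage outright: choose integer $a_j$ satisfying the digit condition with correct residues modulo $M^k$, and adjust their stable digits. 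The four stable-digit choices shift $\sum 2^{j-1}a_j$ by multiples of $M^k$ in a controlled way, which can be used to hit $P$ exactly once $k$ is large compared to $\abs{P}$. Uniqueness modulo $M^k$ then identifies this integer solution with the $2$-adic one.
\end{enumerate}
Either route finishes surjectivity, and combined with the injectivity above shows that $F$ is a bijection.
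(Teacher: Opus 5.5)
Your injectivity argument is correct. Once the $b_j$ are fixed, the digit condition says $a_j=b_j\oplus E_j$, and your bit-by-bit triangular solve shows that $(e_{j,i})_{i<k}\mapsto P \bmod M^k$ is a bijection for every $k$. So $F$ is injective, and every $(P,b_1,\dots,b_m)$ has a unique $2$-adic preimage. The paper does not reprove Proposition~\ref{prop:bij}; it cites Stong.

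Surjectivity, however, is not proved. You correctly reduce it to showing that the $2$-adic solution consists of integers, but you then only note that the recursion has finitely many states and is eventually periodic. That is not enough. A digit string that is eventually periodic with period greater than one is a rational $2$-adic integer that is not an integer, so you need the period to be $1$. Neither route (1) nor route (2) is carried out:
\begin{itemize}
\item Route (1), a ``monotonicity argument'', does not name the mechanism. Moreover, if ``carry'' means the bit-level carry (as your bound of order $m$ suggests), it is not a complete state: the shifted high bits of block $i-1$ of $a_2,\dots,a_m$ spill into block $i$.
\item In route (2) there are $2^m$ stable-digit choices, not four, and the required count is not done. With only four shifts the argument would fail for $m\geq 3$.
\end{itemize}

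The missing step is exactly the carry computation in the paper's proof of Proposition~\ref{prop:bilip}. It never uses bijectivity and applies verbatim to your $2$-adic solution.
\begin{itemize}
\item Add $M$-adically, listing $a_j$ with multiplicity $2^{j-1}$. Then the only state between columns is the carry $\sigma_{i-1}\in\{0,\dots,M-2\}$.
\item Past the stable region of $P$ and the $b_j$, the digit condition forces $d_{j,i}=(M-1)\varepsilon_{j,i}$ with $\varepsilon_{j,i}\in\{0,1\}$. The column equation becomes \eqref{eq:carry-stable}, with $e_i=\sum_j 2^{j-1}\varepsilon_{j,i}$.
\item Since $M-1\equiv -1 \pmod M$, reducing modulo $M$ and using the bounds on $\sigma_{i-1}$ and $e_i$ forces $e_i=\sigma_{i-1}$ when $P_i=0$ and $e_i=\sigma_{i-1}+1$ when $P_i=M-1$.
\item Substituting back gives $\sigma_i=\sigma_{i-1}$. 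So the carry is constant from the start of the stable region on, not merely eventually.
\item Hence $e_i$ is constant, each $\varepsilon_{j,i}$ is constant by uniqueness of binary expansions, and every $a_j$ has a stable digit.
\end{itemize}

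Alternatively, your route (2) closes as follows.
\begin{itemize}
\item Choose $k$ with $M^k>2\abs{P}$ and with every $b_j$ stable from position $k$ on. Let $a_j^{(0)}\in[0,M^k)$ realize the residue solution, and put $\Sigma:=\sum_j 2^{j-1}a_j^{(0)}$.
\item Then $0\leq \Sigma\leq (M-1)(M^k-1)$ and $\Sigma\equiv P \pmod{M^k}$, so $\ell:=(\Sigma-P)/M^k\in\{0,\dots,M-1\}$.
\item Write $\ell=\sum_{j\in S}2^{j-1}$. For $j\in S$, replace $a_j^{(0)}$ by $a_j^{(0)}-M^k$, which switches the stable digit to $M-1$. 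This is still admissible, because $c_{j,i}\in\{0,M-1\}$ for $i\geq k$. The result is a preimage in $X_m^m$.
\end{itemize}
The point is that the $2^m$ stable-digit choices produce exactly the $M$ consecutive shifts $-sM^k$, $0\leq s\leq M-1$, which is precisely the range needed.
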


Since $L$ is a linear map between finite-dimensional Banach spaces, $L$ and $F = L|_{X_m^{m}}$ are Lipschitz. We now prove that $F$ is bi-Lipschitz.

\begin{prop}\label{prop:bilip}
    For all $z \in \RR^{2m}$ and all $x \in X_m^{m}$,
$\norm{L(z)}_\infty \leq (M-1)\norm{z}_\infty$ and $M^{-1}\norm{x}_\infty \leq \norm{F(x)}_\infty$. Both constants are sharp.
\end{prop}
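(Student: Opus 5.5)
The upper bound is a direct computation, and the lower bound is proved by contradiction using the $M$-adic digits. For the upper bound, note that every coordinate of $L(z)$ other than $P$ is some $b_j$, so it is bounded by $\norm{z}_\infty$. For the first coordinate,
\[
\abs{P}\le \sum_{j=1}^m 2^{j-1}\abs{a_j}\le (2^m-1)\norm{z}_\infty=(M-1)\norm{z}_\infty .
\]
Since $M-1\ge 1$, this gives $\norm{L(z)}_\infty\le (M-1)\norm{z}_\infty$. The bound is attained at $z$ with all $a_j=1$, for which $P=M-1$ and $\norm{z}_\infty=1$.

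For the lower bound, fix $x=((a_1,b_1),\dots,(a_m,b_m))\in X_m^m$ and set $N:=\norm{F(x)}_\infty$. Then $\abs{b_j}\le N$ for every $j$ and $\abs{P}\le N$, so we only need $\abs{a_j}\le MN$ for each $j$. Suppose first that $N=0$. Then every $b_j=0$, so every digit of $b_j$ is $0$. The digit condition then forces every digit of $a_j$ to lie in $\{0,M-1\}$, so each $a_j$ is a sum of terms $(M-1)M^i$, possibly with a complemented stable tail.

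In general, choose $k$ with $h_{k-1}\le N+\tfrac12$, roughly $M^{k-1}\sim N$, so that each $b_j\in[-h_k,h_k)$. The digit condition says the $i$-th digit of $a_j$ is $c_i$ or $M-1-c_i$. Writing $d_i^{(j)}=c^{(j)}_i+\epsilon_i^{(j)}(M-1-2c^{(j)}_i)$ with $\epsilon_i^{(j)}\in\{0,1\}$ splits $a_j$ into a part controlled by $b_j$ plus a sum of digit-flip contributions. I would then analyze $P=\sum_j 2^{j-1}a_j$ digit by digit. The key observation is that the weights $2^{j-1}$, $j=1,\dots,m$, give distinct subset sums in $\{0,\dots,M-1\}$. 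Let $i_0$ be a high digit position at which some $a_j$ has a digit outside the range forced by the $b$'s, for instance a large digit where $b_j$ has digit $0$ at positions $\ge k$. At position $i_0$ the flip pattern over $j$ contributes a nonzero multiple of $M^{i_0}$, and the lower digits cannot cancel it, because each lower contribution is bounded by a geometric series with ratio $1/M$. This would give $\abs{P}\gtrsim M^{i_0-1}$, and hence $\abs{a_j}\lesssim M\abs{P}\le MN$.

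The cleanest route is an induction on $k$ through the nested boxes $S_k$. I would show that if $F(x)\in[-h_k,h_k)^{m+1}$ then $x\in S_{k+1}^m$, so that $\norm{x}_\infty\le h_{k+1}=Mh_k$. The inductive step strips off the lowest digit. The $m+1$ coordinates of $F(x)$ modulo $M$ and modulo $2$ determine the lowest digits of all the $a_j$: the digit of $b_j$ gives the pair $\{c,M-1-c\}$, and the binary digits of $P \bmod M$ pick the choice. After subtracting these lowest digits and dividing by $M$, we recurse, which is exactly Stong's bijectivity mechanism read quantitatively.

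The main obstacle is the carry bookkeeping in this recursion. After removing the lowest digits, $P$ has to be divided by $M$ exactly and the box must shrink by a factor $M$, which needs care at the asymmetric endpoint $-h_k$ and with negative stable digits. Sharpness of $M^{-1}$ should come from a near-extremal example: all $b_j=0$ (or small), with $a_j$ chosen so that the contributions $2^{j-1}a_j$ nearly cancel, for example $a_m=(M-1)$-type values balanced against the $a_j$ with negative stable digit, giving $\norm{x}_\infty\approx M\norm{F(x)}_\infty$. I would find this example by brute-force search when $m=2$.
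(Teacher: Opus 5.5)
Your upper bound and its extremal example are correct and match the paper's row-sum argument. The lower bound, however, is not proved, and neither route you sketch establishes $\norm{x}_\infty\le M\norm{F(x)}_\infty$.

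\emph{The box induction.} It loses a factor of $M$ even if completed. Its conclusion $x\in S_{k+1}^m$ gives only $\norm{x}_\infty\le h_{k+1}=M^2h_{k-1}$. For the least $k$ with $F(x)\in[-h_k,h_k)^{m+1}$, you know only $\norm{F(x)}_\infty\ge h_{k-1}$, so you would get $\norm{x}_\infty\le M^2\norm{F(x)}_\infty$. Its inductive step also does not close. Stripping the lowest digits gives $P'=\lfloor P/M\rfloor-\sigma_0$, where $\sigma_0$ is the carry in $\sum_j2^{j-1}d_{j,0}=(P\bmod M)+M\sigma_0$. This carry can be as large as $M-2$, so $P'$ may fall below $-h_{k-1}$ and the hypothesis is not inherited. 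That is exactly the obstacle you name and leave open, and it is the heart of the proof.

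\emph{The high-digit sketch.} It has the same hole. First, $i_0$ is not well defined as stated: for $(-1,0)\in X_m$ the digit $M-1$ sits over the digit $0$ at every position. Second, non-cancellation is precisely what must be shown. The lower digits contribute up to $(M-1)(M^{i_0}-1)$, which is the same order as the top term. Third, a bound $\abs{P}\gtrsim M^{i_0-1}$ would again cost a factor of $M$. The case $N=0$ is also left unfinished; as in the paper, it needs Proposition~\ref{prop:bij}.

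The missing idea is to work only in the stable region. Choose $t$ with $M^{t-1}\le N<M^t$. Then $P$ and every $b_j$ are stable from position $t$ on, so the digit condition gives $d_{j,i}=(M-1)\varepsilon_{j,i}$ with $\varepsilon_{j,i}\in\{0,1\}$ for $i\ge t$. It then suffices to show that each $a_j$ is stable from position $t$: this gives $\abs{a_j}\le M^t\le MN$, the only loss being the rounding of $N$ up to $M^t$.

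The paper proves this stability with the carries in $\sigma_{i-1}+\sum_j2^{j-1}d_{j,i}=P_i+M\sigma_i$. One has $0\le\sigma_i\le M-2$. For $i\ge t$, reduction mod $M$ forces $e_i:=\sum_j2^{j-1}\varepsilon_{j,i}$ to equal $\sigma_{i-1}$ or $\sigma_{i-1}+1$, according as $P_i=0$ or $P_i=M-1$. Hence $\sigma_i=\sigma_{i-1}$, and all $\varepsilon_{j,i}$ are constant.

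Alternatively, your high-digit idea can be repaired. Let $i_0\ge t$ be the highest position where some $a_j$ differs from its stable digit $(M-1)\varepsilon_j^\infty$. Set $E_{i_0}:=\sum_j2^{j-1}\varepsilon_{j,i_0}$ and $E^\infty:=\sum_j2^{j-1}\varepsilon_j^\infty$; these differ by uniqueness of binary expansions. Write $P=M^{i_0}\bigl[(M-1)E_{i_0}-ME^\infty\bigr]+R$ with $0\le R\le(M-1)(M^{i_0}-1)$. If $E_{i_0}>E^\infty$ then $P\ge M^{i_0}$, and if $E_{i_0}<E^\infty$ then $P\le-M^{i_0}-(M-1)$. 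Either way this contradicts $\abs{P}<M^t$.

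Finally, sharpness of $M^{-1}$ needs an explicit example for every $m$, not a search at $m=2$. The paper takes $x=\bigl((-M,-1),\ldots,(-M,-1),(M-2,1)\bigr)$, for which $F(x)=(0,-1,\ldots,-1,1)$. Nonzero $b_j$ matter here: if all $b_j=0$ and $\norm{F(x)}_\infty=1$, then $\norm{x}_\infty\le M-1$.
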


\begin{proof}
    \emph{Upper bound.} The rows of $L$ have coefficient sums $\sum_{j=1}^{m} 2^{j-1} = M-1$ and $1$, of which the first is the larger. Therefore, $\norm{L(z)}_\infty \leq (M-1)\norm{z}_\infty$.

    \emph{Lower bound.} Fix a point $x = \bigl((a_1,b_1), \ldots, (a_m,b_m)\bigr)$ of $X_m^{m}$, write $F(x) = (P, b_1, \ldots, b_m)$, and set $N := \norm{F(x)}_\infty$. If $N = 0$, then $x = 0$ by Proposition~\ref{prop:bij}. Hence we may assume $N \geq 1$. Write $d_{j,i}$, $c_{j,i}$, and $P_i$ for the $i$-th $M$-adic digits of $a_j$, $b_j$, and $P$. We now set up the equation $\sum_{j=1}^{m} 2^{j-1} a_j = P$ as a grade school addition in base $M$, listing each term $a_j$ with multiplicity $2^{j-1}$:
    \[
        \renewcommand{\arraystretch}{1.0}
        \setlength{\arraycolsep}{4pt}
        \begin{array}{r|cccccccc}
            \text{position} & \cdots & i+1 & i & i-1 & \cdots & 2 & 1 & 0 \\
            \hline
            \text{carry}    &        & \sigma_{i} & \sigma_{i-1} & \sigma_{i-2} &  & \sigma_{1} & \sigma_{0} &  \\
            a_1 & \cdots & d_{1,i+1} & d_{1,i} & d_{1,i-1} & \cdots & d_{1,2} & d_{1,1} & d_{1,0} \\
            a_2 & \cdots & d_{2,i+1} & d_{2,i} & d_{2,i-1} & \cdots & d_{2,2} & d_{2,1} & d_{2,0} \\
            a_2 & \cdots & d_{2,i+1} & d_{2,i} & d_{2,i-1} & \cdots & d_{2,2} & d_{2,1} & d_{2,0} \\
            \vdots & & \vdots & \vdots & \vdots &  & \vdots & \vdots & \vdots \\
            a_m & \cdots & d_{m,i+1} & d_{m,i} & d_{m,i-1} & \cdots & d_{m,2} & d_{m,1} & d_{m,0} \\
            \hline
            P & \cdots & P_{i+1} & P_i & P_{i-1} & \cdots & P_{2} & P_{1} & P_{0}
        \end{array}
    \]
    Since $\sum_{j=1}^{m} 2^{j-1} = M-1$, the list has $M-1$ terms. We perform the addition one position at a time, beginning at position $0$, where there is no carry. The entries in position $i$, together with the carry $\sigma_{i-1}$ into that position, total $\sigma_{i-1} + \sum_{j=1}^{m} 2^{j-1} d_{j,i}$. The last $M$-adic digit of this total is recorded in position $i$ of the sum, and the multiples of $M$ that remain are carried into position $i+1$. Writing $\sigma_i$ for the number carried, and formally setting $\sigma_{-1} := 0$, we obtain
        \begin{equation}\label{eq:carry}
            \sigma_{i-1} + \textstyle\sum_{j=1}^{m} 2^{j-1} d_{j,i} \ =\ P_i + M \sigma_i \quad \text{for all }i \geq 0.
        \end{equation}
    Each of the $M-1$ entries in a position is at most $M-1$, and $(M-2) + (M-1)^{2} < M(M-1)$. Induction on $i$ now gives
        \begin{equation}\label{eq:carry-bound}
            0 \leq \sigma_i \leq M-2 .
        \end{equation}

    Put $t := \lfloor \log_M N \rfloor + 1$. Since $\abs P$ and every $\abs{b_j}$ are at most $N < M^{t}$, the digits $P_i$ and $c_{j,i}$ with $i \geq t$ are stable digits, hence lie in $\{0, M-1\}$, and the digit condition puts $d_{j,i}$ there as well. Write $d_{j,i} = (M-1)\varepsilon_{j,i}$ with $\varepsilon_{j,i} \in \{0,1\}$, and put $e_i := \sum_{j=1}^{m} 2^{j-1} \varepsilon_{j,i} \in \{0, 1, \ldots, M-1\}$. For $i \geq t$ the identity \eqref{eq:carry} reads
        \begin{equation}\label{eq:carry-stable}
            \sigma_{i-1} + (M-1) e_i \ =\ P_i + M \sigma_i .
        \end{equation}
    Reducing \eqref{eq:carry-stable} modulo $M$ gives $\sigma_{i-1} - e_i \equiv P_i$, while \eqref{eq:carry-bound} and $0\leq e_i\leq M-1$ give $-(M-1) \leq \sigma_{i-1}-e_i\leq M-2$. Only two cases are possible: $e_i = \sigma_{i-1}$ when $P_i = 0$, and $e_i = \sigma_{i-1} + 1$ when $P_i = M-1$. Substituting either identity for $e_i$ into \eqref{eq:carry-stable} yields $\sigma_i = \sigma_{i-1}$.

    The digit $P_i$ is constant for $i\geq t$. Since $\sigma_i=\sigma_{i-1}$ for all $i\geq t$, the carries are constant from position $t-1$ onward. By \eqref{eq:carry-stable}, $e_i$ is constant for all $i\geq t$. Since $e_i=\sum_{j=1}^m 2^{j-1}\varepsilon_{j,i}$ and binary expansions are unique, each $\varepsilon_{j,i}$ is constant for $i\geq t$. Hence the $M$-adic expansion of every $a_j$ is stable from position $t$ onward, and thus, $\abs{a_j}\leq M^t\leq MN$. Since also $\abs{b_j}\leq N$ for every $j$, $\norm{x}_\infty \leq MN = M\norm{F(x)}_\infty$.

    \emph{Sharpness.} For the upper bound, take $z=\bigl( (1,0),\ldots,(1,0)\bigr)$. For the lower bound, consider
$x = \bigl( (-M, -1), \ldots, (-M,-1),\ (M-2, 1) \bigr)$.
    Each pair lies in $X_m$, since $-M = [\overline{M-1},\, 0]_M$, $-1 = [\overline{M-1}]_M$, $M-2 = [\overline{0},\, M-2]_M$, and $1 = [\overline{0},\, 1]_M$. Then
$\sum_{j=1}^{m} 2^{j-1} a_j = -M(2^{m-1}-1) + 2^{m-1}(M-2) = 0$, so $\norm{F(x)}_\infty = 1$ while $\norm{x}_\infty = M$.
\end{proof}

\subsection{Assembling the space-filling surface} \label{subsec:surface}

Recall that $E_k$ denotes the $k$-th level approximation of $E_\infty$, defined in Section~\ref{sec:selfsimilar}.

\begin{figure}[t]
    \centering
    \includegraphics[width=.70\textwidth]{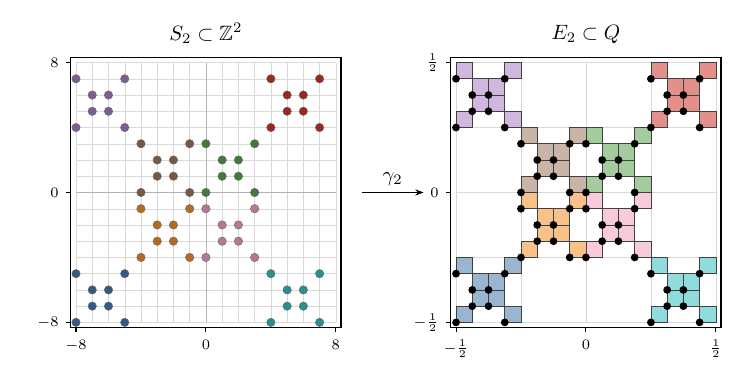}
    \caption{The dilation $\gamma_2$ carries $S_2$ from Stong's set onto the corners of the family of squares $E_2$. Illustrated with $m=2$.}
    \label{fig:lattice}
\end{figure}

\begin{lemma}\label{lem:gamma}
    For $k \geq 1$, let $\gamma_k \colon \RR^{2} \rightarrow \RR^{2}$ be the dilation $\gamma_k(x) := M^{-k}x$. Then $\gamma_k$ carries $S_k = X_m \cap [-h_k, h_k)^{2}$ bijectively onto the set of lower-left-hand corners of the $M^{-k}$-squares forming $E_k$; see Figure~\ref{fig:lattice}.
\end{lemma}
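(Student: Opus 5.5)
We will match coordinates digit by digit. Both sets have $2^{(m+1)k}=(2M)^k$ elements, since $\#S_k$ was computed above and $E_k$ consists of $(2M)^k$ squares (by the OSC the squares $\phi_w(Q)$ are distinct). So the strategy is to give an explicit formula for the lower-left corners and check that it coincides with $\gamma_k(S_k)$.

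\emph{Step 1: the corners of $E_k$.} In the original coordinates $Q=[-\tfrac12,\tfrac12]^2$, iterate $\phi_w$ for $w=(p_1,q_1)\cdots(p_k,q_k)\in\mathcal{A}^k$. The lower-left corner of $\phi_{p,q}(Q)$ is $\tfrac1M(-\tfrac12,-\tfrac12)+c_{p,q}=(-\tfrac12+\tfrac pM,-\tfrac12+\tfrac qM)$. By induction on $k$, the lower-left corner of $\phi_w(Q)$ is
\[
\Bigl(-\tfrac12+\textstyle\sum_{i=1}^k p_iM^{-i},\ -\tfrac12+\sum_{i=1}^k q_iM^{-i}\Bigr).
\]
Multiplying by $M^k$, the corners correspond under $\gamma_k^{-1}$ to the points $(A-h_k,\,B-h_k)$. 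Here $A=\sum_i p_iM^{k-i}$ and $B=\sum_i q_iM^{k-i}$ are integers in $[0,M^k)$ whose ordinary $k$-digit base-$M$ expansions have digit pairs in $\mathcal{A}$. Distinct words give distinct corners, by uniqueness of base-$M$ expansion.

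\emph{Step 2: the shift by $-h_k$ preserves the digit condition.} We need to show that $(a,b)\in S_k$ if and only if $(a+h_k,\,b+h_k)=(A,B)$ with $A,B\in[0,M^k)$ and the $k$ low digit pairs in $\mathcal{A}$. Note that $h_k=M^k/2=\tfrac M2 M^{k-1}$. For $a\in[-h_k,h_k)$ there are two cases.

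If $a\ge0$, the stable digit is $0$, the top written digit $d_{k-1}$ is less than $M/2$, and $A=a+h_k$ changes only digit $k-1$, which becomes $d_{k-1}+M/2$.

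If $a<0$, the stable digit is $M-1$ and $d_{k-1}\ge M/2$ (from complementing $-a-1<h_k$). Then $A=a+h_k$ has the same low $k$ digits, except that digit $k-1$ becomes $d_{k-1}-M/2$.

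In both cases the $k-1$ lowest digits are untouched, and the top digit undergoes the map $d\mapsto d+M/2 \pmod M$. The key check is that $\mathcal{A}$ is invariant under $(d,c)\mapsto(d+M/2,\,c+M/2)\bmod M$. This holds because $d=c$ is preserved, and $d+c=M-1$ becomes $d+c+M\equiv M-1$ with both digits in range.

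\emph{Step 3: the stable digits.} For $(a,b)\in S_k$, the stable digit of each coordinate is determined by its sign, and the pair of stable digits must itself lie in $\mathcal{A}$. That pair is $(0,0)$, $(M-1,M-1)$, $(0,M-1)$ or $(M-1,0)$, all of which lie in $\mathcal{A}$. So the digit condition in positions at or above $k$ is automatic. The condition in positions below $k$ is exactly the word condition from Step 2.

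Hence $\gamma_k(S_k)$ equals the set of lower-left corners. Injectivity is clear, since $\gamma_k$ is a dilation. The main obstacle is the digit bookkeeping in Step 2 for negative $a$, namely verifying that $d_{k-1}\ge M/2$ and that adding $h_k$ produces no carry into higher positions. I would handle this using the complement rule $v\leftrightarrow -v-1$ from the subsection on $M$-adic representations.
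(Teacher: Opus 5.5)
Your argument is correct, but it is organized differently from the paper's. The paper inducts on $k$. Appending a letter $(d,c)$ to $w$ moves the lower-left corner by $M^{-(k+1)}(d,c)$. On the lattice side, $(a,b)\mapsto(Ma+d,\,Mb+c)$ shifts every $M$-adic digit up one place (negative integers included) and writes $(d,c)$ in position $0$, which gives a bijection $S_k\times\mathcal{A}\to S_{k+1}$. In that scheme the offset $-h_k$ is never analyzed for general $k$. It lives entirely in the base case $k=1$, which the paper states without computation.

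That base case is precisely your Steps 2 and 3 with $k=1$. The corner of $Q_{p,q}$ is $\gamma_1(p-M/2,\,q-M/2)$, whose lowest digits are $p+M/2$ and $q+M/2$ modulo $M$. So one needs the invariance of $\mathcal{A}$ under the half-shift, and the higher digits are stable digits whose pairs automatically lie in $\mathcal{A}$. You instead treat all $k$ at once via the closed form $M^{-k}(A-h_k,\,B-h_k)$. This costs you the sign case analysis at the top digit, but it makes explicit the two facts the paper's base case uses tacitly, and it gives a direct dictionary between words and points of $S_k$.

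Two minor remarks. First, the cardinality count in your opening paragraph is not needed, since Steps 1--3 prove equality of the sets outright. Second, for $a<0$, adding $h_k$ does carry out of position $k-1$ in the $M$-adic sense: the carry is what clears the tail of digits $M-1$. So what the complement rule actually lets you verify is that $A=(h_k-1)-(-a-1)$ is a digitwise subtraction without borrows. This holds because the top digit of $-a-1$ is less than $M/2$.
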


\begin{proof}
    We argue by induction on $k$. The lower-left-hand corners of the squares of $E_1$ are the points $\gamma_1(a,b)$ with $(a,b) \in [-h_1, h_1)^{2}$ and $d_0 = c_0$ or $d_0 = M-1-c_0$. These are exactly the $2M$ squares $Q_{p,q}$ with $(p,q) \in \mathcal{A}$, which settles the case $k=1$.

    Suppose the claim holds for some $k \geq 1$. The squares of $E_{k+1}$ are obtained by replacing each square $\phi_w(Q)$ of $E_k$ by the $2M$ squares $\phi_{w(d,c)}(Q)$ with $(d,c) \in \mathcal{A}$, whose lower-left-hand corner is that of $\phi_w(Q)$ displaced by $M^{-(k+1)}(d,c)$. On the lattice side, multiplying by $M$ moves every digit of $a$ and $b$ up one place and writes a $0$ in the lowest. The assignment $(a,b) \mapsto (Ma+d,\, Mb+c)$ therefore carries $S_k \times \mathcal{A}$ bijectively onto $S_{k+1}$: the digit condition holds in the higher places because it holds for $(a,b)$, and in the lowest because $(d,c) \in \mathcal{A}$. As $\gamma_{k+1}(Ma+d,\, Mb+c) = \gamma_k(a,b) + M^{-(k+1)}(d,c)$, the two agree.
\end{proof}

\begin{lemma}\label{lem:lattice-in-E}
    For every $k \geq 1$ we have $\gamma_k(S_k) \subset \gamma_{k+1}(S_{k+1}) \subset E_\infty$, and
$E_\infty = \overline{\bigcup_{k \geq 1} \gamma_k(S_k)}$.
\end{lemma}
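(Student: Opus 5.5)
We prove the two inclusions and then the closure identity, working with the centered coordinates $Q=[-\tfrac12,\tfrac12]^2$ used in the definition of $E_k$ and Lemma~\ref{lem:gamma}.

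\emph{First inclusion.} Take $(a,b)\in S_k$. Then $(Ma,Mb)$ has the digits of $(a,b)$ shifted up one place, with a $0$ appended in position $0$. Since $(0,0)\in\mathcal{A}$, the digit condition still holds. Moreover $\abs{Ma},\abs{Mb}$ lie in $[-Mh_k,Mh_k)=[-h_{k+1},h_{k+1})$. Hence $(Ma,Mb)\in S_{k+1}$, and $\gamma_{k+1}(Ma,Mb)=\gamma_k(a,b)$. This is exactly the case $(d,c)=(0,0)$ of the bijection $S_k\times\mathcal{A}\to S_{k+1}$ in the proof of Lemma~\ref{lem:gamma}.

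\emph{Second inclusion.} Iterating the first inclusion, each $z\in\gamma_k(S_k)$ lies in $\gamma_l(S_l)$ for every $l\ge k$. By Lemma~\ref{lem:gamma}, it is therefore a lower-left corner of some square of $E_l$, so $z\in E_l$ for all $l\ge k$. Since the $E_l$ decrease, $z\in E_l$ for all $l\ge0$, and so $z\in\bigcap_l E_l=E_\infty$.

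An alternative route is via fixed points. The point $\gamma_k(a,b)$ is the lower-left corner $\phi_w(q_1)$, where $q_1=(-\tfrac12,-\tfrac12)$ is the fixed point of $\phi_{0,0}$ and lies in $E_\infty$. Then $\phi_w(q_1)\in\phi_w(E_\infty)\subset E_\infty$ by self-similarity. Either argument suffices, and I would present the second since it is cleaner.

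\emph{Density.} The set $E_\infty$ is closed, so $\overline{\bigcup_k\gamma_k(S_k)}\subset E_\infty$. For the reverse inclusion, take $z\in E_\infty\subset E_k$. Then $z$ lies in some square $\phi_w(Q)$ with $\abs w=k$. That square has side $M^{-k}$, and by Lemma~\ref{lem:gamma} its lower-left corner is a point of $\gamma_k(S_k)$. So $\operatorname{dist}_\infty(z,\gamma_k(S_k))\le M^{-k}\to0$, giving $z$ in the closure.

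\emph{Main obstacle.} The only delicate point is the bookkeeping in the first inclusion. We must check that the box condition $[-h_k,h_k)$ is preserved under multiplication by $M$; it is, because the half-open interval scales exactly. We must also check that appending the digit pair $(0,0)$ respects the digit condition when the stable digits are $M-1$. Multiplying a negative number by $M$ shifts the digits and inserts $0$ at the bottom, with the stable digit unchanged, so $(0,0)\in\mathcal{A}$ settles this. Everything else follows directly from Lemma~\ref{lem:gamma}.
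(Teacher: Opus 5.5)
Your proof is correct and follows the paper's argument. Your preferred route to $\gamma_k(S_k)\subset E_\infty$ (the corners are $\phi_w(q_1)$, $q_1$ is a fixed point lying in $E_\infty$, and $\phi_w(E_\infty)\subset E_\infty$) and your density argument via the lower-left corner of a level-$k$ square are exactly what the paper does. The only difference is cosmetic: you check the nesting $\gamma_k(S_k)\subset\gamma_{k+1}(S_{k+1})$ on the lattice side via $(a,b)\mapsto(Ma,Mb)$, while the paper writes the same fact geometrically as $\phi_w(q_1)=\phi_{w(0,0)}(q_1)$, and both are the $(d,c)=(0,0)$ case of Lemma~\ref{lem:gamma} (also, you mean $Ma,Mb\in[-h_{k+1},h_{k+1})$, not $\abs{Ma},\abs{Mb}$).
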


\begin{proof}
    By Lemma~\ref{lem:gamma} the points of $\gamma_k(S_k)$ are exactly the points $\phi_w(q_1)$ with $\abs w = k$. Since $q_1 \in E_\infty$ by \eqref{eq:corners} and $\phi_w(E_\infty) \subset E_\infty$, every such point lies in $E_\infty$. The index $(0,0)$ belongs to $\mathcal{A}$ and $q_1$ is the fixed point of $\phi_{0,0}$. It follows that $\phi_w(q_1) = \phi_{w(0,0)}(q_1)$, which exhibits each point of $\gamma_k(S_k)$ as a point of $\gamma_{k+1}(S_{k+1})$. Finally, any $x \in E_\infty$ lies in $\phi_w(Q)$ for some word $w$ of length $k$, and $\phi_w(q_1)$ is a corner of that square, whence $\norm{x - \phi_w(q_1)}_\infty \leq \diam \phi_w(Q) = M^{-k}$. The union is therefore dense in $E_\infty$, and it is contained in the compact set $E_\infty$.
\end{proof}

    Since $g$ maps $[0,1]$ onto $E_\infty$ and $L$ is defined on all of $\RR^{2m}$, the assignment
$(u_1, \ldots, u_m) \mapsto L\bigl( g(u_1), \ldots, g(u_m) \bigr)$ is defined on all of $[0,1]^{m}$ and has image $L\bigl(E_\infty^{m}\bigr)$. We now show that this image contains a cube centered at the origin.

\begin{lemma}\label{lem:cube}
    Put $B := \bigl[ -\tfrac{1}{2M},\ \tfrac{1}{2M} \bigr]^{m+1}$. Then $B \subset L\bigl(E_\infty^{m}\bigr)$.
\end{lemma}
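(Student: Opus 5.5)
The proof goes by density plus compactness, with Stong's bijection (Proposition~\ref{prop:bij}) and its bi-Lipschitz lower bound (Proposition~\ref{prop:bilip}) doing the combinatorial work. Since $E_\infty$ is compact and $L$ is continuous, $L(E_\infty^m)$ is compact, hence closed. It therefore suffices to show that for every $k\geq 1$, every point of the rescaled lattice $M^{-k}\ZZ^{m+1}\cap B$ lies in $L(E_\infty^m)$. Indeed, each $y\in B$ is within $M^{-k}$ of such a lattice point, because $B$ is a cube whose corners $\pm\tfrac{1}{2M}$ lie on the grid $M^{-k}\ZZ$ for $k\geq1$. Letting $k\to\infty$ then gives $y\in L(E_\infty^m)$.

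Fix $k\geq1$ and $v\in\ZZ^{m+1}$ with $M^{-k}v\in B$, so that $\norm{v}_\infty\leq M^{k}/(2M)=h_{k-1}$. By Proposition~\ref{prop:bij} there is a unique $x=((a_1,b_1),\ldots,(a_m,b_m))\in X_m^m$ with $F(x)=v$. By Proposition~\ref{prop:bilip}, $\norm{x}_\infty\leq M\norm{v}_\infty\leq M h_{k-1}=h_k$. Hence each $(a_j,b_j)$ lies in $X_m\cap[-h_k,h_k]^2$. By linearity of $L$,
\[
M^{-k}v=L\bigl(\gamma_k(a_1,b_1),\ldots,\gamma_k(a_m,b_m)\bigr).
\]
So it remains to show that each $\gamma_k(a_j,b_j)\in E_\infty$. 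When $(a_j,b_j)\in S_k$, i.e.\ all coordinates lie in the half-open range $[-h_k,h_k)$, this is Lemma~\ref{lem:lattice-in-E}.

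The main obstacle is the boundary case, where some coordinate equals $h_k$ exactly, so that $(a_j,b_j)\notin S_k$. I expect to handle it as follows. Note that $\gamma_k(S_k)$ are the lower-left corners of the squares of $E_k$, which lie inside $Q=[-\tfrac12,\tfrac12]^2$. The point $\gamma_k(a_j,b_j)$ is then a corner of the square of $E_k$ whose lower-left corner is obtained by changing the offending coordinate. Concretely, I would check directly that if $(a,b)\in X_m$ with $|a|,|b|\leq h_k$, then $(a,b)$ is one of the four corners $\phi_w(q_i)$ of some square $\phi_w(Q)$ in $E_k$. The tool is the complement symmetry $a\mapsto -a-1$: it preserves the digit condition, and the rotational symmetry $R$ of $E_\infty$ corresponds to it. The corners $q_1,\dots,q_4$ lie in $E_\infty$ by \eqref{eq:corners}, so every corner of every square of $E_k$ lies in $E_\infty$.

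Alternatively, the boundary case can be sidestepped by slightly sharpening the bound. Since $\norm{v}_\infty\leq h_{k-1}$ and, by the carry argument in Proposition~\ref{prop:bilip}, $|a_j|\leq M^t$ with $t=\lfloor\log_M N\rfloor+1$, one can track the stable-digit structure to place $a_j$ in $[-h_k,h_k)$. Concretely, the digits $P_i$, $c_{j,i}$ are stable from position $k-1$ on, and then the $a_j$ are stable from position $k$ on, which together with the sign gives membership in the half-open box. I would try this refinement first, falling back on the corner argument if it fails.
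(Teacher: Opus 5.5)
Your route is the paper's route: closedness of $L(E_\infty^m)$, density of $M$-adic points, $F^{-1}$ together with the lower bound of Proposition~\ref{prop:bilip}, Lemma~\ref{lem:lattice-in-E}, and linearity of $L$. The one divergence is that you admit lattice points on $\partial B$. That creates the boundary case $\norm{x}_\infty=h_k$, and your proposal leaves that case unsettled.

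The paper avoids it by using only $y$ with $\norm{y}_\infty<\tfrac{1}{2M}$. These points are still dense in $B$, and then $\norm{x}_\infty\le M\norm{z}_\infty<h_k$. So every coordinate pair lies in $S_k$, and Lemma~\ref{lem:lattice-in-E} applies directly.

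If you keep the closed box, your refinement as worded would not close the case. Stability of $a_j$ from position $k$ on does not exclude $a_j=h_k=\tfrac{M}{2}M^{k-1}$, whose own expansion is stable from position $k$. Run the carry argument of Proposition~\ref{prop:bilip} with $t=k-1$ instead; this is legitimate because $\norm{v}_\infty\le\tfrac12M^{k-1}<M^{k-1}$. It then gives stability from position $k-1$, hence $a_j\in[-M^{k-1},M^{k-1})\subset[-h_k,h_k)$.

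More simply still, the boundary case is empty:
\begin{itemize}
\item $\abs{b_j}\le h_{k-1}<h_k$ takes care of $b_j$, and it forces $c_{j,k-1}\in\{0,M-1\}$.
\item The digit condition then forces $d_{j,k-1}\in\{0,M-1\}$.
\item But $h_k$ has digit $M/2\notin\{0,M-1\}$ in position $k-1$, since $M\ge4$, so $a_j\neq h_k$.
\end{itemize}
The corner argument, which you only sketch, is therefore never needed.
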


\begin{proof}
    The set $E_\infty^{m}$ is compact and $L$ is continuous. We see that $L\bigl(E_\infty^{m}\bigr)$ is compact, hence closed. Because the dyadic rationals $\ZZ[1/2] = \bigcup_{k \geq 1} M^{-k} \ZZ$ are dense in $\RR$, to show $B \subset L\bigl(E_\infty^{m}\bigr)$, it will suffice to prove that every $y \in \ZZ[1/2]^{m+1}$ with $\norm{y}_\infty < \tfrac{1}{2M}$ belongs to $L\bigl(E_\infty^{m}\bigr)$.

    Fix such a $y$ and choose $k \geq 1$ with $z := M^{k} y \in \ZZ^{m+1}$. Then
$\norm{z}_\infty = M^{k}\norm{y}_\infty < M^{k}/(2M) = h_k/M$.
    Let $x := F^{-1}(z)$, which is defined by Proposition~\ref{prop:bij}. The lower bound of Proposition~\ref{prop:bilip} gives $\norm{x}_\infty \leq M \norm{z}_\infty < h_k$. Since that inequality is strict, every coordinate pair of $x$ lies in $X_m \cap (-h_k, h_k)^{2} \subset S_k$. Hence $\gamma_k(x) \in \bigl( \gamma_k(S_k) \bigr)^{m} \subset E_\infty^{m}$ by Lemma~\ref{lem:lattice-in-E}. Since $\gamma_k$ is multiplication by the scalar $M^{-k}$ and $L$ is linear,
$L\bigl( \gamma_k(x) \bigr) = M^{-k} L(x) = M^{-k} F(x) = M^{-k} z = y$.
\end{proof}

\begin{proof}[Proof of Theorem~\ref{thm:main}]
    Let $\pi$ denote the nearest-point projection of $\RR^{m+1}$ onto $B$, which is coordinatewise truncation, hence $1$-Lipschitz for $\norm{\cdot}_\infty$ and the identity on $B$; and let $T(y) := My + \tfrac12(1, \ldots, 1)$, which carries $B$ bijectively onto $[0,1]^{m+1}$ and scales $\norm{\cdot}_\infty$ by $M$. Write $\mathbf{g} := \bigotimes_1^m g$, so that $\mathbf{g}(u) = \bigl( g(u_1), \ldots, g(u_m) \bigr) \in \RR^{2m}$ for $u \in [0,1]^{m}$ and define $f := T \circ \pi \circ L \circ \mathbf{g}$.

    For the H\"older estimate, let $u, v \in [0,1]^{m}$. Then
        \begin{align*}
            \norm{f(u)-f(v)}_\infty
                &= M \norm{ \pi L \mathbf{g}(u) - \pi L \mathbf{g}(v) }_\infty
                \ \leq\ M \norm{ L\mathbf{g}(u) - L\mathbf{g}(v) }_\infty \\
                &\leq M(M-1) \norm{\mathbf{g}(u)-\mathbf{g}(v)}_\infty
                \ =\ M(M-1) \max_{j} \norm{g(u_j)-g(v_j)}_\infty \\
                &\leq 4M(M-1) \norm{u-v}_\infty^{\alpha}
                \ =\ 2^{m+2}(2^{m}-1) \norm{u-v}_\infty^{\alpha} ,
        \end{align*}
    by the scaling of $T$, the $1$-Lipschitz property of $\pi$, the linearity of $L$ together with the upper bound of Proposition~\ref{prop:bilip}, the supremum norm on $\RR^{2m}$, and the H\"older condition for $g$ from Proposition~\ref{thm:curve}. Hence $f$ satisfies \eqref{the-param}.

    For surjectivity, the image of $[0,1]^{m}$ under $L \circ \mathbf{g}$ is $L\bigl(E_\infty^{m}\bigr)$, which contains $B$ by Lemma~\ref{lem:cube}. Since $\pi$ is the identity on $B$, the composite $\pi \circ L \circ \mathbf{g}$ maps $[0,1]^{m}$ onto $B$, and $T$ carries $B$ onto $[0,1]^{m+1}$.
\end{proof}

\section{Corollaries and H\"older constants} \label{sec:further}

\subsection{Euclidean spaces and convex bodies} \label{subsec:consequences}

\begin{proof}[Proof of Corollary~\ref{cor:global}]
    Let $1 \leq m < n$. A composition of an $\alpha$-H\"older map with a $\beta$-H\"older map is $\alpha\beta$-H\"older. Composing the surjections of Theorem~\ref{thm:main} in dimensions $m, m+1, \ldots, n-1$ gives a surjection of $[0,1]^{m}$ onto $[0,1]^{n}$ of exponent
$\tfrac{m}{m+1} \cdot \tfrac{m+1}{m+2} \cdots \tfrac{n-1}{n} = \tfrac{m}{n}$.
    When $m = 1$ the first map in the chain is instead a $(1/2)$-H\"older space-filling curve. The case $m = n$ is the identity.
\end{proof}

Two more results follow with little extra work. In the first, the cubes are replaced by Euclidean spaces, and in the second, by convex bodies.

\begin{cor}\label{cor:euclidean}
    For all $1 \leq m \leq n$, there exist $(m/n)$-H\"older continuous maps from $\RR^{m}$ onto $\RR^{n}$.
\end{cor}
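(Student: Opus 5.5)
The plan is to obtain a surjection $\RR^m\to\RR^n$ by extending the construction of Theorem~\ref{thm:main} self-similarly to all scales, rather than gluing cube maps. The case $m=n$ is the identity. For $m<n$ it suffices, as in the proof of Corollary~\ref{cor:global}, to treat consecutive dimensions and then compose. The step $m=1\to 2$ uses a $(1/2)$-H\"older space-filling curve $\RR\to\RR^2$, for example a Peano/Hilbert curve extended to the line by its self-similarity. For $m\geq 2$, $m\to m+1$, the exponents again multiply to $m/n$.

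\textbf{Extending $g$ to the line.} Since $\phi_{0,0}$ fixes $q_1$, the set $E_\infty$ is contained in $\phi_{0,0}^{-1}(E_\infty)$. Put $\widetilde E:=\bigcup_{k\ge0}\phi_{0,0}^{-k}(E_\infty)$; in coordinates centred at $q_1$ this is $\bigcup_k M^kE_\infty$. In the centred lattice coordinates of Section~\ref{sec:stong} this should be the closure of $\bigcup_k M^{-k}X_m$, i.e.\ the ``continuous Stong set.'' In the original centered coordinates one can do the same thing with the four corner quadrants, or simply use $X_m$ scaled. Reparametrize $g$ so that the preimage of $q_1$ sits at a point $t_0$ where $g$ is self-similar under $\phi_{0,0}$: that is, $g(t_0+(2M)^{-1}u)=\phi_{0,0}(g(t_0+u))$ after arranging $g$ to start at $q_1$ (the concatenation in Proposition~\ref{thm:curve} starts at $q_1$, and $\psi_1$ fixes $q_1$). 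Define $\widetilde g:[0,\infty)\to\RR^2$ by $\widetilde g(t):=M^k g((2M)^{-k}t)$ for $t\le (2M)^k$; consistency follows from \eqref{eq:self}. Since $(2M)^\alpha=M$, every dilate has the same H\"older constant, so $\widetilde g$ is $\alpha$-H\"older with constant $4$ and has image $\widetilde E$. To reach all of $\RR$, one could use a two-sided version (running $R^2$-rotated copies for negative $t$), but a cheaper route is to precompose with the $1$-Lipschitz map $t\mapsto|t|$.

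\textbf{Surjectivity onto $\RR^{m+1}$.} Set $f:=L\circ\bigotimes_1^m\widetilde g:\RR^m\to\RR^{m+1}$. It is $\alpha$-H\"older with constant $4(M-1)$ by Proposition~\ref{prop:bilip}. It is not clear that $\widetilde E$ in corner-centred coordinates yields surjectivity, so the cleaner choice is to center at the origin. Lemma~\ref{lem:cube} gives $B\subset L(E_\infty^m)$, with $E_\infty$ centered at $0$. Scaling by $M^k$ gives $M^kB\subset L((M^kE_\infty)^m)$, and these sets exhaust $\RR^{m+1}$. It therefore suffices to parametrize $\bigcup_k M^kE_\infty$ (centered) by an $\alpha$-H\"older curve on $[0,\infty)$.

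\textbf{Main obstacle.} The real difficulty is this nesting: centered $E_\infty\subset ME_\infty$ holds since $E_\infty$ is symmetric and contains $0$, but the parametrizations must nest compatibly. I would handle it by a telescoping chain that avoids exact self-similarity. On $[(2M)^{k},(2M)^{k+1}]$, run a copy of $M^{k+1}g$ rescaled so that its domain has length comparable to $(2M)^{k+1}$; it has H\"older constant $\lesssim 4$. Join consecutive pieces at common points: $0\in E_\infty$ is a point of every dilate, and the endpoints of successive pieces can be joined by segments from $g$'s start or end to $0$ inside $M^{k+1}E_\infty$ (an $X$-shape contains the diagonal segments). Then check that a concatenation of pieces whose sizes grow geometrically stays $\alpha$-H\"older. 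For $x<y$ in different pieces, bound $\norm{f(x)-f(y)}$ by the diameter of the larger piece, $\lesssim M^{k+1}$, which is $\lesssim |x-y|^\alpha$ unless $x,y$ are near a junction. Near a junction, use the triangle inequality through the junction point together with \eqref{eq:conc}, exactly as in the proof of Proposition~\ref{thm:curve}. Composing across dimensions then yields Corollary~\ref{cor:euclidean}.
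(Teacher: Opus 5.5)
Your final construction (the telescoping chain of your last paragraph, composed with $L$ and with $t\mapsto\abs{t}$) is correct. It takes a genuinely different route from the paper's.

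The paper never builds the map on $\RR^m$ by hand. It rescales the cube surjections of Corollary~\ref{cor:global} to maps $F_k$ from cubes $Q_k\subset\RR^m$ onto $[-k,k]^n$ with a common H\"older constant. It spreads the $Q_k$ out so that pairs of points in different cubes satisfy the H\"older condition trivially, and then extends coordinatewise by McShane's theorem for the snowflaked metric $\norm{x-y}_\infty^{\alpha}$. That argument is soft:
\begin{itemize}
\item it uses the cube maps as a black box;
\item it treats all $1\le m\le n$ at once, with no composition on unbounded domains and no separate case $m=1$;
\item it does not worsen the H\"older constant.
\end{itemize}

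Your argument instead exploits the internal structure of the construction. You build one curve $\widetilde g$ on $[0,\infty)$ whose image contains every dilate $M^kE_\infty$. Linearity of $L$ then gives $L\bigl((M^kE_\infty)^m\bigr)=M^kL(E_\infty^m)\supseteq M^kB$, and these cubes exhaust $\RR^{m+1}$. This is fully explicit and avoids extension theorems. On the other hand, it depends on the product-plus-linear-map form of the construction and needs composition across consecutive dimensions. It also yields an $M$-dependent constant (the non-adjacent case of the chain costs a factor of order $M$), and these constants compound under composition.

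Some points to tidy, none of which affects the final argument.
\begin{itemize}
\item The self-similar extension in your first paragraph is not consistent for the loop $g$. In corner coordinates, $g(u/(2M))=\phi_{0,0}(g(u))$ holds for $u\le\tfrac12$, since $\psi_1=\phi_{0,0}$ and $\psi_2=\phi_{0,0}\circ R$. It fails beyond $u=\tfrac12$ because $\psi_3\neq\phi_{0,0}\circ R^2$; only $s$ extends this way. You rightly abandon that construction anyway, since in corner coordinates the image of $L$ lies in $[0,\infty)^{m+1}$.
\item The inclusion $E_\infty\subset ME_\infty$ is true, but not merely because $E_\infty$ is symmetric and contains $0$. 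It follows from $M^{-(k+1)}S_k\subset\gamma_{k+1}(S_{k+1})$ and Lemma~\ref{lem:lattice-in-E}.
\item In fact you need neither the nesting nor that the connecting segments stay inside $M^{k+1}E_\infty$. Surjectivity only uses $\widetilde g([0,\infty))\supseteq M^kE_\infty$ for every $k$.
\item When writing the chain, give the segment from $M^{k+1}c$ to $M^{k+2}c$, with $c=(-\tfrac12,-\tfrac12)$, a time interval of length about $(2M)^{k+2}$. A segment of length $\ell$ traversed uniformly in time $\tau$ has $\alpha$-H\"older constant $\ell\tau^{-\alpha}$. With this choice consecutive pieces have comparable time lengths, so your non-adjacent estimate applies.
\item For $m=1$, extending a corner-anchored Hilbert or Peano curve by self-similarity fills only a quadrant. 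Two fixes give the needed $(1/2)$-H\"older surjection $\RR\to\RR^2$: the same telescoping chain with centred copies, or post-composition with a $1$-Lipschitz zigzag surjection $[0,\infty)\to\RR$ in each coordinate.
\end{itemize}
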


\begin{proof}
    Let $f:[0,1]^m\rightarrow[0,1]^n$ be an $\alpha$-H\"older surjection, $\alpha:=m/n$. If $Q \subset \RR^{m}$ is a cube of side length $s$ and $R \subset \RR^{n}$ is a cube of side length $s^{\alpha}$, then composing $f$ with the dilations and translations carrying $Q$ onto $[0,1]^{m}$ and $[0,1]^{n}$ onto $R$ gives an $\alpha$-H\"older surjection from $Q$ onto $R$ with the same H\"older constant as $f$. Take such surjections $F_k$ from cubes $Q_k \subset \RR^{m}$ onto the cubes $[-k,k]^{n}$, and place the cubes $Q_k$ so far apart that points in distinct cubes are separated, in the snowflaked distance $\norm{x-y}_\infty^{\alpha}$, by more than the diameter of the larger of the two image cubes.
    \par\vspace{5pt}
\centerline{%
      \definecolor{cubeone}{HTML}{4878A8}%
      \definecolor{cubetwo}{HTML}{F28E2B}%
      \definecolor{cubethree}{HTML}{59A14F}%
      \begin{tikzpicture}[baseline=(current bounding box.center),x=1cm,y=1cm,
          >=stealth,
          ipbox/.style   = {draw=#1!85!black, fill=#1!18, line width=0.4pt},
          ipbox/.default = black,
          iprule/.style  = {black!25, line width=0.3pt},
          iplab/.style   = {font=\scriptsize}]
        \draw[iprule] (-0.35,0) -- (4.05,0);
        \draw[ipbox=cubeone]   (0,0)    rectangle (0.34,0.34);
        \draw[ipbox=cubetwo]   (1.15,0) rectangle (1.67,0.52);
        \draw[ipbox=cubethree] (2.90,0) rectangle (3.66,0.76);
        \node[iplab,anchor=north] at (0.17,-0.07) {$Q_1$};
        \node[iplab,anchor=north] at (1.41,-0.07) {$Q_2$};
        \node[iplab,anchor=north] at (2.3,0.5) {$\cdots$};
        \node[iplab,anchor=north] at (3.28,-0.07) {$Q_k$};
        \draw[->,line width=0.5pt] (4.35,0.38) -- (5.15,0.38)
          node[midway,above,iplab] {$F_k$};
        \draw[ipbox=cubethree,fill=none] (5.47,-0.20) rectangle (6.63,0.96);
        \draw[ipbox=cubetwo,  fill=none] (5.66,-0.01) rectangle (6.44,0.77);
        \draw[ipbox=cubeone,  fill=none] (5.85,0.18)  rectangle (6.25,0.58);
        \node[iplab,anchor=west] at (6.80,0.38) {$[-k,k]^{n}$};
      \end{tikzpicture}}
    \par\vspace{3pt}\noindent
    The map that agrees with $F_k$ on each $Q_k$ is then $\alpha$-H\"older on the union of the cubes, because a pair of points in distinct cubes satisfies the H\"older condition with constant 1. Since the $\alpha$-H\"older maps are precisely the maps that are Lipschitz for the snowflaked distance, the McShane extension theorem, applied to each coordinate, extends it to an $\alpha$-H\"older map on $\RR^{m}$, see e.g.~\cite{Hei01}. As the cubes $[-k,k]^{n}$ exhaust $\RR^{n}$, the extension is surjective.
\end{proof}

\begin{cor}\label{cor:convex}
    Let $1 \leq m \leq n$. If $J \subset \RR^{m}$ is a convex set with nonempty interior and $K \subset \RR^{n}$ is a compact convex set, then there exists an $(m/n)$-H\"older continuous surjection from $J$ onto $K$.
\end{cor}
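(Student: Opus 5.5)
We reduce to the cube-to-cube map of Corollary~\ref{cor:global}, precomposing with a Lipschitz retraction-type map of $J$ onto a cube and postcomposing with a Lipschitz map of a cube onto $K$. Set $\alpha:=m/n$ and let $f:[0,1]^m\to[0,1]^n$ be an $\alpha$-H\"older surjection from Corollary~\ref{cor:global}. We may assume $K\neq\emptyset$.

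\emph{Target side.} We will produce a Lipschitz surjection $\Psi:[0,1]^n\to K$. Choose $R$ with $K\subset[-R,R]^n$, and let $\Pi_K:\RR^n\to K$ be the Euclidean nearest-point projection onto the closed convex set $K$. This map is $1$-Lipschitz in $\norm{\cdot}_2$, hence $\sqrt n$-Lipschitz in $\norm{\cdot}_\infty$, and it fixes $K$. Composing the affine map carrying $[0,1]^n$ onto $[-R,R]^n$ with $\Pi_K$ gives a Lipschitz surjection $\Psi:[0,1]^n\to K$. Note that $K$ may have empty interior; the projection handles this without change.

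\emph{Domain side.} Since $J$ has nonempty interior, it contains a closed cube $Q$ of side $s>0$. We want a Lipschitz map $\Phi:J\to Q$ that is the identity on $Q$; the coordinatewise truncation onto $Q$ works, since it is $1$-Lipschitz for $\norm{\cdot}_\infty$ and restricts to $J$ with image exactly $Q$. Let $A$ be the similarity carrying $Q$ onto $[0,1]^m$. Convexity of $J$ plays no role beyond guaranteeing that it contains $Q$, and $J$ may be unbounded. Then
\[
h:=\Psi\circ f\circ A\circ\Phi:J\to K
\]
is onto, because each factor is onto its successor's domain. For the H\"older estimate, $\norm{h(x)-h(y)}_\infty\le \sqrt n\,(2R)\Hold_\alpha(f)\,s^{-\alpha}\norm{x-y}_\infty^\alpha$, using that Lipschitz maps composed with an $\alpha$-H\"older map give an $\alpha$-H\"older map on any domain. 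Unboundedness of $J$ causes no issue, since the bound is global.

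\emph{Main obstacle.} There is essentially none. The only points to check are that truncation onto $Q$ restricted to $J$ is onto $Q$ (immediate, since $Q\subset J$) and that $\Pi_K$ is Lipschitz for the $\ell_\infty$ norm with constant $\sqrt n$. The case $m=n$ is covered by taking $f$ to be the identity.
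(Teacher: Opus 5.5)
Your proof is correct and follows essentially the same route as the paper. You precompose the cube-to-cube surjection of Corollary~\ref{cor:global} with a $1$-Lipschitz projection of $\RR^m$ onto a cube $Q\subset J$; your coordinatewise truncation is exactly the Euclidean nearest-point projection onto an axis-parallel cube. You then postcompose with an affine identification of $[0,1]^n$ with a cube containing $K$, followed by the nearest-point projection onto $K$. Your explicit constant $\sqrt n\,(2R)\Hold_\alpha(f)\,s^{-\alpha}$ just makes quantitative the paper's closing remark that the exponent does not depend on the choice of norm.
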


\begin{proof}
    Since $J$ has nonempty interior it contains a cube $Q$, and $K$, being bounded, is contained in a cube $R$. Composing the surjection of Corollary~\ref{cor:global} with the affine maps that identify $Q$ and $R$ with unit cubes gives an $(m/n)$-H\"older surjection $h$ of $Q$ onto $R$. Since affine maps are Lipschitz, the exponent is unchanged. Let $p$ be the nearest-point projection of $\RR^{m}$ onto $Q$ and let $\pi$ be the nearest-point projection of $\RR^{n}$ onto $K$. Both are $1$-Lipschitz for the Euclidean norm, see e.g.~\cite[Proposition 5.3]{Brezis2011}, and $\pi$ is the identity on $K$. Hence the restriction of $\pi \circ h \circ p$ to $J$ is an $(m/n)$-H\"older surjection of $J$ onto $K$. Since all norms on a finite-dimensional space are bi-Lipschitz equivalent, the exponent is independent of the norm.
\end{proof}

\subsection{H\"older constants} \label{sec:best-constant}

For a $(1/2)$-H\"older space-filling curve $\gamma:[0,1]\rightarrow[0,1]^2$, the square of the H\"older constant with respect to the Euclidean norm on $\RR^{2}$, that is, $\kappa(\gamma):= \sup_{t_1<t_2} \norm{\gamma(t_2)-\gamma(t_1)}_2^{2}/(t_2-t_1)$, is called the \emph{square-to-linear ratio}. The least possible value $\kappa=\inf_{\gamma}\kappa(\gamma)$ satisfies $3.625\leq \kappa\leq 4$. The upper bound is attained by (scaling limits of) the Niedermeier-Reinhardt-Sanders H-curve \cite{NRS02}, and the lower bound is proved by Shchepin and Mychka~\cite{SM21}. Malykhin and Shchepin~\cite{MS23} make an extensive study of $\Hold_{1/n}(\gamma)^n$ for space-filling curves $\gamma:[0,1]\rightarrow[0,1]^n$ when $n\geq 2$.

Cube-to-square ratios could be defined similarly for maps between cubes of higher dimension, but we prefer to think directly about H\"older constants. The volume estimate \eqref{vol-est} with $\alpha = m/(m+1)$ gives $H \geq 1$ for every $H$ admissible in \eqref{the-param}, and the following lemma improves that bound.

\begin{lemma}\label{thm:main2} If $f:[0,1]^m\rightarrow[0,1]^{m+1}$ is surjective and satisfies \begin{equation}\|f(x)-f(y)\|_\infty \leq H \|x-y\|^{m/(m+1)}_\infty\quad\text{for all }x,y\in[0,1]^m,\end{equation} then $H\geq 2^{m/(m+1)}$.\end{lemma}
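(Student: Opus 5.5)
The plan is to improve on the volume estimate \eqref{vol-est} by a pigeonhole argument applied to the corners of the target cube, rather than by covering. The key observation is that the $2^{m+1}$ vertices of $[0,1]^{m+1}$ are pairwise at $\ell_\infty$ distance exactly $1$. Two distinct vertices differ in at least one coordinate, and there the difference is $1$. There are more vertices in the target than there are subcubes of side $\tfrac12$ in the domain.

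First, since $f$ is surjective, for each vertex $v\in\{0,1\}^{m+1}$ we choose a preimage $x_v\in[0,1]^m$ with $f(x_v)=v$. Because the vertices are distinct, the $2^{m+1}$ points $x_v$ are distinct.

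Next, we split $[0,1]^m$ into the $2^m$ closed subcubes of side length $\tfrac12$, which cover $[0,1]^m$. We have $2^{m+1}>2^m$ preimage points and only $2^m$ subcubes, so by pigeonhole some subcube contains two points $x_v$ and $x_w$ with $v\neq w$. Their distance then satisfies $\norm{x_v-x_w}_\infty\le \tfrac12$.

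Finally, we apply the Hölder hypothesis to this pair:
\[
1=\norm{v-w}_\infty=\norm{f(x_v)-f(x_w)}_\infty\le H\norm{x_v-x_w}_\infty^{m/(m+1)}\le H\,2^{-m/(m+1)}.
\]
This gives $H\ge 2^{m/(m+1)}$, as claimed.

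No step looks like a real obstacle. The only point to check is the counting: we need strictly more target points at mutual distance $1$ (here $2^{m+1}$) than domain cells of diameter $\tfrac12$ (here $2^m$). The argument uses only this count together with the fact that the $\ell_\infty$ diameter of a closed half-cube is $\tfrac12$. If one wanted a sharper bound, the same scheme could be run with a finer domain partition and a finer target net, but for the stated bound the vertex pigeonhole suffices.
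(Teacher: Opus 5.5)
Your proof is correct and follows the paper's argument essentially verbatim: you choose preimages of the $2^{m+1}$ vertices of $[0,1]^{m+1}$, pigeonhole them into the $2^m$ closed half-side subcubes of $[0,1]^m$, and apply the H\"older condition to a pair sharing a subcube to get $1 \leq H\,2^{-m/(m+1)}$. Nothing is missing.
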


\begin{proof}
    The $2^{m+1}$ vertices of $[0,1]^{m+1}$ are pairwise at distance $1$ in the supremum norm. Since $f$ is onto we may choose a preimage of each vertex, and distinct vertices have distinct preimages. This gives $2^{m+1}$ distinct points of $[0,1]^{m}$. Cut $[0,1]^{m}$ into the $2^{m}$ closed cubes of side $\tfrac12$ that meet at the center. Since $2^{m+1} > 2^{m}$, two of the chosen preimages $x$ and $y$ lie in a common cube. Then $\norm{x-y}_\infty \leq \tfrac12$ while $\norm{f(x)-f(y)}_\infty = 1$. The H\"older condition now gives
$1 = \norm{f(x)-f(y)}_\infty \leq H \norm{x-y}_\infty^{m/(m+1)} \leq H \, 2^{-m/(m+1)}$,
    which is the assertion.
\end{proof}

The least H\"older constant of a surjection from $[0,1]^{m}$ onto $[0,1]^{m+1}$ lies between $2^{m/(m+1)}$ and $2^{m+2}(2^{m}-1)$.

\begin{question}\label{q:constant}
    Do there exist $m/(m+1)$-H\"older maps from $[0,1]^{m}$ onto $[0,1]^{m+1}$ whose H\"older constants are bounded by a universal constant?
\end{question}

\providecommand{\bysame}{\leavevmode\hbox to3em{\hrulefill}\thinspace}
\providecommand{\MR}{\relax\ifhmode\unskip\space\fi MR }
\providecommand{\MRhref}[2]{%
  \href{http://www.ams.org/mathscinet-getitem?mr=#1}{#2}
}
\providecommand{\href}[2]{#2}

\end{document}